\def\Var{\mathrm{Var}}
\newcommand{\indic}{\mathbbm{1}}
\newtheorem{Th}{Theorem}
\newtheorem{Lemma}{Lemma}
\renewenvironment{proof}{\noindent{\bf Proof.}}{\hfill
  $\blacksquare$\par\noindent}
\newcommand{\norm}[1]{\ensuremath{\vert\!\vert #1 \vert\!\vert}}
\newcommand{\E}{\ensuremath{\mathbb{E}}}
\renewcommand{\P}{\ensuremath{\mathbb{P}}}
\newcommand{\R}{\ensuremath{\mathbb{R}}}
\renewcommand{\L}[1]{\ensuremath{\mathbb{L}^{#1}}}
\newcommand{\e}{\ensuremath{\varepsilon}}
\newcommand{\la}{\ensuremath{{jk}}}
\newcommand{\p}{\ensuremath{\psi}}
\newcommand{\Supp}{\mathrm{Supp}}
\newcommand{\Z}{\ensuremath{\mathbb{Z}}}
\renewcommand{\ln}{{\log\,}}
\renewcommand{\L}{\ensuremath{\mathbb{L}}}
\newcommand{\La}{\ensuremath{\Lambda}}
\newcommand{\Ga}{\ensuremath{\Gamma}}
\newcommand{\ga}{\ensuremath{\gamma}}
\newcommand{\al}{\ensuremath{\alpha}}
\newcommand{\si}{\ensuremath{\sigma}}
\newcommand{\tp}{\ensuremath{{\tilde{\psi}}}}
\newcommand{\be}{\ensuremath{\beta}}
\newcommand{\tb}{\ensuremath{\tilde{\beta}}}
\newcommand{\hb}{\ensuremath{\hat{\beta}}}
\newcommand{\jk}{\ensuremath{{jk}}}
\numberwithin{equation}{section}
\begin{document}

\title{Adaptive density estimation: a curse of support?}
\title{Adaptive density estimation: a curse of support?}
\author{Patricia REYNAUD-BOURET \thanks{CNRS, Laboratoire Jean-Alexandre Dieudonn\'e, CNRS UMR 6621, Universit\'e de Nice Sophia-Antipolis, Parc Valrose, 06108 Nice Cedex 2, France. E-mail: reynaudb@unice.fr} \thanks{Corresponding author: tel: (+33) 04 92 07 60 33; fax: (+33) 04 93 51 79 74}~, Vincent RIVOIRARD\thanks{Laboratoire de Math\'ematique, CNRS 8628, Universit\'e de Paris Sud, 91405 Orsay Cedex, France. D\'epartement de Math\'ematiques et Applications, ENS-Paris, 45 rue d'Ulm, 75230 Paris Cedex 05, France. E-mail: Vincent.Rivoirard@math.u-psud.fr} \ and Christine TULEAU-MALOT\thanks{Laboratoire Jean-Alexandre Dieudonn\'e, CNRS UMR 6621, Universit\'e de Nice Sophia-Antipolis, Parc Valrose, 06108 Nice Cedex 2, France. E-mail: malot@unice.fr}}


\maketitle

\begin{abstract}
This paper deals with the classical problem of density estimation on the real line. Most of the existing papers devoted to minimax properties assume that the support of the underlying density is bounded and known. But this assumption may be very difficult to handle in practice. In this work, we show that, exactly as a curse of dimensionality exists when the data lie in $\R^d$, there exists a curse of support as well when the support of the density is infinite. As for the dimensionality problem where the rates of convergence deteriorate when the dimension grows, the minimax rates of convergence may deteriorate as well when the support becomes infinite. This problem is not purely theoretical since the simulations show that the support-dependent methods are really affected in practice by the size of the density support, or by the weight of the density tail. We propose a method based on  a biorthogonal wavelet thresholding rule that is adaptive with respect to the nature of the support and the regularity of the signal, but that is also robust in practice to this curse of support. The threshold, that is proposed here, is very accurately calibrated so that the gap between optimal theoretical and practical tuning parameters is almost filled.
\end{abstract}

\noindent {\bf Keywords} Density estimation, Wavelet, Thresholding rule, infinite support.

~\\
\noindent {\bf Mathematics Subject Classification (2000)} 62G05 62G07 62G20

\section{Introduction}
This paper deals with the classical problem of density estimation for unidimensional data. Our aim is to provide an adaptive
method which requires as few assumptions as possible on the underlying density in order to apply it in an exploratory
way. In particular, we do not want to have any assumption on the density support.
 Moreover this method should be quite easy to implement and should have good theoretical performance as well.

Density estimation is a task that lies at the core of many
 data preprocessing. From this point
of view, no assumption should
be made on the underlying function to estimate. Without giving a full survey of the subject, let us describe classical methods of the literature.

At least in a first approach, histograms or kernel
methods are often used. The main problem is to choose the bandwidth (see  for instance \citet{silver1}), which is usually performed by
cross-validation (see the fundamental paper by \citet{rud}). There is no clear theoretical results about the
performance of this cross-validation method from an adaptive minimax point of view.
 However, methodologies based on
kernel methods are the most widespread in practice. Consequently, several data-driven methods have been developed  (see
\citet{silver} for a good review). There exist fundamentally two ways to extend  cross-validation. The first one is to obtain more competitive methods from the computational point of view (see  \citet{GrayMoore}). The second one is to find more robust and less undersmoothing methods (see \citet{JMS} for a recent survey).

All these methods suffer from a lack of spatial adaptivity since the bandwidth is selected uniformly in space. 
To improve this point, \citet{SainScott}  have suggested a practical kernel method which makes the choice of the bandwidth more local, this algorithm being still based on intensive cross-validation. All these methods do not require in practice the preliminary knowledge of the support but do not provide theoretical guarantees from the minimax point of view. On the contrary, in the white noise model, under assumptions on the underlying signal and its support, it is possible to select the best possible local bandwidth in the adaptive minimax setting via the Lespki method (see \citet{LepMS} for instance).

The Lepski method is closely related to model selection methods.
Following Akaike's criterion for histograms, \citet{gwen}   has derived adaptive minimax procedures for density estimation (see \citet{stflourpascal} for detailed proofs and \citet{rozebir} for a practical point of view). To remedy the lack of smoothness of histograms,
 piecewise polynomial estimates can also be used (see for instance \citet{gwen2} and \citet{roze} for the corresponding software, \citet{wn} or \citet{KKP} for the  spline basis). It is worth emphasizing that the necessary input of all these methods is the support of the underlying density, classically assumed to be $[0,1]$. We can also cite the results based on $\ell_1$ penalties. See \citet{bunea}, \citet{bunea2} and \citet{bertin} who derived oracle inequalities for which no assumptions on the support are made. However, minimax optimality is not investigated in these papers and for simulations, 
\citet{bertin} considered signals supported by $[0,1]$.   So, whether the
support plays a key role for $\ell_1$ methodologies remains an open
question even if we naturally conjecture that the answer is yes. In practice, the data are usually rescaled by the smallest and largest observations before performing any of the previous algorithms. This preprocessing has not been studied theoretically. In particular, what happens if the density is heavy-tailed?

Now let us turn to  wavelet thresholding. \citet{djkp} have first provided theoretical adaptive minimax results in the density setting. This paper is a theoretical benchmark but  their threshold depends on the extraknowledge of the infinite norm of the underlying density. In practice, even if this quantity is known, this choice is often too conservative. From a computational point of view, the DWT algorithm due to \citet{Mal} combined with a keep or kill rule on each coefficient makes these methods as one of the easiest adaptive
 methods to implement, once the threshold is known. Here lies the fundamental problem: after rescaling and binning the data as in \citet{anton} for instance, one can reasonably think that the number of  observations in a ``not too small'' interval is Gaussian, up to some eventual transformation  (see \citet{cai}).
So basically the thresholding rules adapted to
 the Gaussian regression setting
 should work here (we refer the reader to the very complete review paper of \citet{abs}  which provides descriptions and comparisons of various wavelet shrinkage and thresholding estimators in the regression setting).
 Of course many assumptions are required.
Even if in \citet{cai}, theoretical justifications are given,
 the method still relies heavily on the precise knowledge of the support which is directly linked to the size of the bins.
In their seminal work \citet{silverHN} have already observed that in practice the basic Gaussian approximation for general wavelet bases was quite poor. This can be corrected by the use of the Haar basis and accurate thresholding rules but the reconstructions are consequently piecewise constant.
Note also that in \citet{silverHN} no assumption was made on the possible support of the underlying density.
More recently, \citet{jll} have proposed an adaptive thresholding procedure on the whole real line. Their threshold is not based on a direct Gaussian approximation. Indeed, the chosen threshold depends randomly on the localization in time and frequency of the coefficient that has to be kept or killed. They derive adaptive minimax results for H\"olderian spaces, exhibiting rates that are different from the bounded support case. However 
there is  a gap between their optimal theoretical and practical tuning parameters of the threshold.

If the main goal of this paper is to investigate assumption-free wavelet thresholding methodologies as explained in the first paragraph, we also aim at fulfilling this gap by designing a new threshold depending on a tuning parameter $\gamma$: the precise form of the threshold  is closely related to sharp exponential inequalities for iid variables, avoiding the use of Gaussian approximation. Unlike methods of \citet{jll} and \citet{silverHN}, all the coefficients (and in particular the coarsest ones) are likely to be thresholded.
Moreover, since our threshold is defined very accurately from a non asymptotic point of view, we obtain sharp oracle inequalities for $\gamma>1$. But we also prove that taking $\gamma<1$ deteriorates the theoretical properties of our estimators. Hence the remaining gap between theoretical and practical thresholds lies in a second order term (see Section \ref{method} for more details). The construction of our estimators and the previous results are stated in Section 2. Next, in Section 3, we illustrate the impact of the bounded support assumption by exhibiting minimax rates of convergence on the whole class of Besov spaces extending the results of \citet{jll}. In particular, when the support is infinite, our results reveal how minimax rates deteriorate according to the sparsity of the density.   We also show that our estimator is  adaptive minimax (up to a logarithmic term) over Besov balls with respect to the regularity  but also with respect to the support (finite or not). In Section 4, we investigate the curse of support for the most well-known support-dependent methods and compare them with our method and with the cross-validated kernel method. Our method, which is naturally spatially adaptive, seems to be robust with respect to the size of the support or the tail of the underlying density. We also implement our method on real data, revealing the potential impact of our methodology for practitioners. The appendices are dedicated to an analytical description of the biorthogonal wavelet basis but also to the proofs of the main results.

\section{Our method}\label{method}
Let us observe a $n$-sample of density $f$ assumed to be in $\L_2(\R)$. We denote this sample $X_1,\dots,X_n$.
We estimate $f$ via its coefficients  on a special biorthogonal wavelet basis, due to \citet{cdf}. The decomposition of $f$ on such a basis takes the following form:
\begin{equation}\label{decom2}
f=\sum_{k\in\Z}\beta_{-1k}\tilde\psi_{-1k}+\sum_{j\geq
0}\sum_{k\in\Z}\beta_{jk}\tilde\psi_{jk},
\end{equation}
where for any  $j\geq 0$ and any $k\in\Z$,
$$\beta_{-1k}=\int_\R          f(x)\psi_{-1k}(x)dx,\quad          \beta_{jk}=\int_\R
f(x)\psi_{jk}(x)dx.$$
The most basic example of biorthogonal wavelet basis is the Haar basis where the father wavelets are given by $$\forall k \in \Z, \quad \psi_{-1k}=\tilde{\psi}_{-1k}=\indic_{[k;k+1]}$$ and the mother wavelets are given by $$\forall j \geq 0,\ \forall k \in \Z, \quad \psi_{jk}=\tilde{\psi}_{jk}=2^{j/2}\left(\indic_{[k2^{-j};(k+1/2)2^{-j})}-\indic_{[(k+1/2)2^{-j};(k+1)2^{-j}]}\right).$$
The other examples we consider are more precisely described in Appendix A. 
The essential feature is that it is possible to use, on  one hand, decomposition wavelets $\psi_{jk}$ that are piecewise constants, and, on the other hand, smooth reconstruction wavelets $\tilde{\psi}_{jk}$. In particular, except for the Haar basis, decomposition and reconstruction wavelets are different.
To shorten mathematical expressions, we set
\begin{equation}\label{Lajk}
\Lambda=\{(j,k):\quad j\geq -1,k\in\Z\}
\end{equation}
and   (\ref{decom2}) can be rewritten as
\begin{equation}
\label{decom}
f=\sum_{(j,k)\in \La}\be_\la \tp_\la\quad \mbox{  with }\quad \be_\la=\int \p_\la(x) f(x)
dx.
\end{equation}
A classical unbiased estimator for $\be_\la$ is the empirical coefficient
\begin{equation}
\label{defest1}
\hb_\la=\frac{1}{n}\sum_{i=1}^n \p_{jk}(X_i),
\end{equation}
whose variance is $\si^2_{jk}/n$ where
$$\si^2_{jk}=\int \p^2_{jk}(x) f(x)dx-\left(\int \p_{jk}(x) f(x)dx\right)^2.$$
Note that $\si^2_{jk}$ is classically unbiasedly estimated by
$\widehat{\si}^2_{jk}$ with
$$\widehat{\si}^2_{jk}=\frac{1}{n(n-1)}\sum_{i=2}^n\sum_{l=1}^{i-1}(\p_{jk}(X_i)-\p_{jk}(X_l))^2.$$
Now, let us define our  thresholding  estimate of $f$.  In the sequel there are two different kinds of steps, depending on whether the estimate is used for theoretical or practical purposes. Both situations are respectively denoted '{\it Th.}' and '{\it Prac}'.
\begin{itemize}
\item[{\bf Step 0~~}] ~
  \begin{itemize}
  \item[{\it Th.~~~}] Choose a constant $c\geq 1$, a real number $c'$ and let $j_0$ such that ${j_0=\lfloor\log_2([n^c(\ln n)^{c'}])\rfloor}$. Choose also a positive constant $\gamma$.
  \item[{\it Prac.~}] Let $j_0=\lfloor\log_2(n)\rfloor$.
  \end{itemize}
\item[{\bf Step 1~~}] Set $\Gamma_n=\{(j,k):\ -1\leq j\leq j_0,\ k\in\Z\}$ and compute for any $(j,k)\in\Gamma_n$, the non-zero empirical coefficients
$\hb_\la$ (whose number is almost surely finite).
\item[{\bf Step 2~~}] Threshold the coefficients by setting $\tb_\la=\hb_\la\indic_{|\hb_\la|\geq \eta_\la}$
  according to the following threshold choice.
\begin{itemize}
  \item[{\it Th.~~~}] Overestimate slightly the variance $\si^2_\la$ by
$${\widetilde{\si}}^2_{jk}=\widehat{\si}^2_{jk}+2\norm{\p_{jk}}_\infty\sqrt{2\gamma\widehat{\si}^2_{jk}\frac{\ln n}{n}}+8\gamma \norm{\p_{jk}}_\infty^2 \frac{\ln n}{n}$$
and choose
\begin{equation}\label{defthresh}
\eta_{\la}=\eta_{\la,\ga}=\sqrt{2\gamma {\widetilde{\si}}^2_{jk}\frac{\ln n}{n}} +\frac{2\norm{\p_{jk}}_\infty\gamma \ln n}{3n}.
\end{equation}
  \item[{\it Prac.~}] Estimate unbiasedly the variance by $\widehat{\si}^2_{jk}$ and choose
\begin{equation}\label{defthreshprac}
\eta_{\la}=\eta_{\la}^{Prac}=\sqrt{2\widehat{\si}^2_{jk} \frac{\ln n}{n}} +\frac{2\norm{\p_{jk}}_\infty \ln n}{3n}.
\end{equation}
\end{itemize}
\item[{\bf Step 3~~}] Reconstruct the function by using the $\tb_\la$'s and denote
\begin{itemize}
  \item[{\it Th.~~~}] \begin{equation}\label{defest}
\tilde{f}_{n,\gamma}=
\sum_{(j,k) \in \Gamma_n} \tb_\la \tp_\la
\end{equation}
  \item[{\it Prac.~}]
\begin{equation}\label{defestbis}
\tilde{f}_{n}^{Prac}=\left(
\sum_{(j,k) \in \Gamma_n} \tb_\la \tp_\la\right)_+
\end{equation}
\end{itemize}
\end{itemize}

Note that this method can easily be implemented with a low computational cost. In particular, unlike the DWT-based algorithms, our algorithm does not need numerical approximations, except at {\bf Step 3} for the computations of the $\tilde\p_{jk}$ (unless, we use the Haar basis). However, a preprocessing, independent of the algorithm, can be used to compute reconstruction wavelets at any required precision.
Both practical and theoretical thresholds are based on the following heuristics.
Let $c_0>0$. Define the heavy mass zone as the set of indices $(j,k)\in\Lambda$  such that $f(x)\geq
c_0$ for $x$ in the support of $\p_\la$ and
 $\|\p_\la\|_{\infty}^2=o_n(n(\log   n)^{-1})$.
In this heavy mass zone, the random term of (\ref{defthresh}) or (\ref{defthreshprac}) is the main one and we asymptotically derive that with large probability
\begin{equation}\label{approxseuil}
\eta_{\la,\ga}\approx\sqrt{2\gamma \widetilde{\sigma}^2_\la\frac{\ln n}{n}} \quad\mbox{ and }\quad \eta_{\la}^{Prac}\approx\sqrt{2 \widehat{\sigma}^2_\la\frac{\ln n}{n}}.
\end{equation}
The shape of the right hand terms in (\ref{approxseuil}) is classical in the density estimation framework (see \citet{djkp}). In fact, they  look like the threshold proposed by \citet{jll} or the universal threshold $\eta^U$ proposed by \citet{dojo} in the Gaussian regression  framework. Indeed, we recall that, in this set-up,
$$\eta^U=\sqrt{2\sigma^2\log{n}},$$ where  $\sigma^2$  (assumed
to be known in the Gaussian framework) is the variance of each noisy wavelet coefficient.
Actually, the deterministic term of
(\ref{defthresh})  (or (\ref{defthreshprac})) constitutes the main difference with the threshold proposed by \citet{jll}: it replaces the second keep or kill rule applied by Juditsky and Lambert-Lacroix on the empirical coefficients. This additional term allows to control large deviation terms for high resolution levels. It is directly linked to Bernstein's inequality (see the proofs in  Appendix B). The forthcoming oracle inequality (Theorem \ref{inegoraclelavraie})  holds with (\ref{defthresh}) for any $\gamma>1$: this is essential to fulfill the gap between theory and practice. Indeed, note that if one takes $c=\gamma=1$ and $c'=0$ then the main difference between (\ref{defthresh}) and  (\ref{defthreshprac}) is that a second order term exists in the estimation of $\si^2_\la$ by $\widetilde{\sigma}^2_\la$. But the main part is exactly the same: when the coefficient lies in the heavy mass zone and when $\gamma$ tends to 1, $\eta_{\la,\ga}$ tends to $\eta_{\la}^{Prac}$ with high probability. Indeed, one can note that for all $\e>0$ and $\gamma>1$,
$$\eta_{\la}^{Prac}\leq\eta_{\la,\ga}\leq \sqrt{2\gamma(1+\e)\widehat{\si}^2_{jk} \frac{\ln n}{n}} +\left(\frac{2}{3}+\sqrt{8+2\e^{-1}}\right)\frac{\norm{\p_{jk}}_\infty\gamma \ln n}{n}.$$

As often suggested in the literature, instead of estimating $\Var(\hb_\la)$, we could have used the inequality
$$\Var(\hb_\la)=
\frac{{\si}^2_{jk}}{n}\leq \frac{\norm{f}_\infty}{n}$$
and we could have replaced ${\widetilde{\si}}^2_{jk}$ with $\norm{f}_\infty$ in the definition of the threshold. But this requires a strong assumption: $f$ is bounded and  $\norm{f}_\infty$ is known. In our paper, $\Var(\hb_\la)$ is accurately  estimated making those conditions unnecessary. 
 Theoretically,  we slightly overestimate $\si^2_{jk}$ to control large deviation terms and this is the reason why we introduce ${\widetilde{\si}}^2_{jk}$. Note that \citet{calib} have proposed thresholding rules based on similar heuristic arguments in the Poisson intensity estimation framework. But proofs and computations are more involved for density estimation because sharp upper and lower bounds for $\widehat{\si}^2_{jk}$ are more intricate.

For practical purpose, $\eta_{\la,\ga}$ (even with $\ga=1$) slightly oversmooths the estimate with respect to $\eta_{\la}^{Prac}$. From a simulation point of view, the linear term $\frac{2\norm{\p_{jk}}_\infty \ln n}{3n}$ in $\eta_{\la}^{Prac}$ with the precise constant $2/3$ seems to be accurate.

The remaining part of this section is dedicated to a precise choice of $\gamma$, first from an oracle point of view, next from a theoretical and practical study.

\subsection{Oracle inequalities}
The oracle point of view has been introduced by \citet{dojo}. In this approach, an estimate is optimal if it can essentially mimic the performance of the ``oracle estimator''. Let us recall that the latter is not a true estimator since it depends on the function to be estimated but it represents an ideal for a particular method (namely, here, wavelet thresholding). So, in our framework, the oracle provides the noisy wavelet coefficients that have to be kept. It is easy to see  that the ``oracle estimate'' is $$\bar{f_n}=\sum_{(j,k)\in \Ga_n}
\bar{\be}_\la\tp_\la,$$ where
$\bar{\be}_\la=\hb_\la \indic_{\{\be_\la^2>{\si}^2_{jk}/n\}}$
satisfies
$$\E\left[(\bar{\be}_\la-{\be}_\la)^2\right]=\min\left(\be_\la^2,\frac{{\si}^2_{jk}}{n}\right).$$
By keeping the coefficients $\hb_\la$ larger than the thresholds defined in (\ref{defthresh}),  our estimator has a risk that is not larger than the oracle risk,  up to a logarithmic term, as stated by the following result.
\begin{Th}\label{inegoraclelavraie}
Let us consider a biorthogonal wavelet basis satisfying the properties described in Appendix A.  If $\ga  >c$,  then $\tilde  f_{n,\ga}$ satisfies  the
following oracle inequality: for $n$ large enough
\begin{equation}\label{inegoraclelavraie1}
\E\left[\norm{\tilde{f}_{n,\ga}-f}_2^2\right]\leq
C_1\left[\sum_{(j,k)\in\Gamma_n}\min\left(\be_\la^2,\ln n\frac{{\si}^2_{jk}}{n}\right)+\sum_{(j,k)\notin\Gamma_n}\be_\la^2\right]+\frac{C_2\log n}{n}
\end{equation}
where $C_1$  is a positive  constant depending  only on
$\ga$, $c$ and the choice of the wavelet basis 
and where $C_2$ is also a
positive constant depending on
$\ga$, $c$, $c'$, 
$\|f\|_2 $ and the choice of the wavelet basis.
\end{Th}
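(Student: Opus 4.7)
The plan is to establish the oracle inequality by a coefficient-wise bias-variance decomposition that accommodates both the biorthogonal structure and the data-driven nature of the threshold. First, I would invoke the Riesz-basis property of the biorthogonal system recalled in Appendix~A to reduce the $\L_2$ risk to a weighted sum of squared coefficient errors,
$$\E\left[\norm{\tilde f_{n,\ga}-f}_2^2\right] \;\leq\; C_\psi \left(\sum_{(j,k)\notin\Gamma_n}\be_\la^2 \;+\; \sum_{(j,k)\in\Gamma_n}\E\bigl[(\tb_\la-\be_\la)^2\bigr]\right).$$
The first sum is already in the target form. For each term of the second sum I would use the identity $(\tb_\la-\be_\la)^2 = (\hb_\la-\be_\la)^2\indic_{|\hb_\la|\geq\eta_\la} + \be_\la^2\indic_{|\hb_\la|<\eta_\la}$.

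Second, the data-driven threshold must be tied down to a deterministic proxy. The expression $\widetilde\si_{jk}^2$ in (\ref{defthresh}) is engineered (via a square-completion of Bernstein's inequality applied to the $U$-statistic $\widehat\si_{jk}^2$) precisely so that the event
$$G_{jk}^\si := \Bigl\{\si_{jk}^2 \leq \widetilde\si_{jk}^2 \leq \kappa_1 \si_{jk}^2 + \kappa_2 \norm{\p_{jk}}_\infty^2 \frac{\ln n}{n}\Bigr\}$$
has probability at least $1-2n^{-\ga}$. On $G_{jk}^\si$, the random threshold $\eta_\la$ is sandwiched between constant multiples of the deterministic proxy $\bar\eta_\la := \kappa\sqrt{\ga\si_{jk}^2\ln n/n} + \kappa'\ga\norm{\p_{jk}}_\infty\ln n/n$, which makes the subsequent case analysis tractable.

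Third, on $G_{jk}^\si$ I would carry out the standard three-regime split according to the size of $|\be_\la|$ relative to $\bar\eta_\la$. If $|\be_\la|\leq\bar\eta_\la/2$, the event $\{|\hb_\la|\geq\eta_\la\}$ forces $|\hb_\la-\be_\la|\gtrsim\bar\eta_\la$, which by Bernstein's inequality has probability at most $n^{-\ga}$; the ``kill'' contribution $\be_\la^2$ is then trivially bounded by $\min(\be_\la^2,\si_{jk}^2\ln n/n)$. Symmetrically, if $|\be_\la|\geq 2\bar\eta_\la$, Bernstein gives $\P(|\hb_\la|<\eta_\la)\leq n^{-\ga}$ and the ``keep'' contribution is the variance $\si_{jk}^2/n$, again dominated by $\min(\be_\la^2,\si_{jk}^2\ln n/n)$. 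In the intermediate zone $\bar\eta_\la/2 \leq |\be_\la|\leq 2\bar\eta_\la$, both contributions are of order $\bar\eta_\la^2 \asymp \si_{jk}^2\ln n/n$, matching the target.

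Finally, the residual contributions coming from $(G_{jk}^\si)^c$ and from the Bernstein deviation events must be summed over $\Gamma_n$. Using $|\hb_\la|\leq\norm{\p_{jk}}_\infty$, a Cauchy-Schwarz bound of the type $\int f\p_{jk}^2 \leq \norm{f}_2(\int \p_{jk}^4)^{1/2}$, and the cardinality bound $|\Gamma_n|\lesssim n^{c}\ln n$ induced by the truncation level $j_0$, the tail $n^{-\ga}$ absorbs the cardinality exactly when $\ga>c$, producing the $C_2\log n/n$ remainder; this is where the constants $c$, $c'$ and $\norm{f}_2$ enter $C_2$. The main obstacle is precisely the self-referential nature of the threshold: $\eta_\la$ depends on the same sample as $\hb_\la$, so the case analysis must be carried out on a single good event on which $\widetilde\si_{jk}^2$ is a sharp two-sided approximation of $\si_{jk}^2$ \emph{and} $\hb_\la$ concentrates around $\be_\la$. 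The specific Bernstein-type second-order correction built into $\widetilde\si_{jk}^2$ is exactly what makes this decoupling possible and forces the structural condition $\ga>c$.
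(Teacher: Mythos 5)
Your overall strategy (frame property, keep/kill decomposition, a good event on which the random threshold is comparable to a deterministic proxy, three-regime split, deviation bounds summed over $\Gamma_n$) is sensible, but two of its load-bearing claims fail as stated, and the missing ingredients are precisely the two lemmas the paper has to prove. First, the assertion that on the good event $\bar\eta_\la^2\asymp \si^2_{jk}\ln n/n$, and that in the small-coefficient regime the kill error $\be_\la^2$ is ``trivially'' bounded by $\min(\be_\la^2,\si^2_{jk}\ln n/n)$, ignores the deterministic linear part of the threshold. At high resolution levels whose support carries little mass one has $\si^2_{jk}\approx D_{jk}\approx 2^jF_\la$ with $F_\la=\int_{\Supp(\p_\la)}f$ tiny, while $\norm{\p_\la}_\infty^2=2^j\norm{\p}_\infty^2$ can be of order $n^c$, so $\ga\norm{\p_\la}_\infty\ln n/n$ can dominate $\sqrt{\si^2_{jk}\ln n/n}$ by a large factor and $\bar\eta_\la^2$ is not comparable to $\si^2_{jk}\ln n/n$. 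To recover the stated $\min$ one needs the mass dichotomy of the paper's Lemma \ref{ecra}: either $F_\la\leq\Theta_\p\ln n/n$, in which case $\be_\la^2\lesssim D_{jk}\ln n/n$ directly, or $F_\la>\Theta_\p\ln n/n$, in which case $\norm{\p_\la}_\infty\ln n/n\leq\sqrt{D_{jk}\ln n/n}$ and the linear term is absorbed; one then substitutes $D_{jk}=\si^2_{jk}+\be_\la^2$, and the leftover $\sum_{(j,k)\in\Gamma_n}\be_\la^2\ln n/n\lesssim\norm{f}_2^2\ln n/n$ is exactly where $\norm{f}_2$ enters $C_2$. None of this is in your sketch, and without it the three-regime analysis does not produce the claimed bound.

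Second, your handling of the residual (bad-event) terms rests on ``the cardinality bound $|\Gamma_n|\lesssim n^c\ln n$'', but $\Gamma_n=\{(j,k):-1\leq j\leq j_0,\ k\in\Z\}$ is infinite, so a per-coefficient bound $n^{-\ga}$ cannot simply be multiplied by a cardinality; moreover the kept coefficients are random, so one cannot restrict a priori to the nonzero empirical ones inside an expectation without an argument. The paper replaces cardinality by mass, $\sum_{(j,k)\in\Gamma_n}F_\la\lesssim\log n$, and combines a Rosenthal moment bound for $\hb_\la-\be_\la$ (with the factor $2^{j_0}/n\approx n^{c-1}$, which is where $\ga>c$ genuinely bites) with the counting fact that $|\hb_\la|>\eta_{\la,\ga}$ forces at least $C'\ln n$ observations in $\Supp(\p_\la)$, so that for low-mass coefficients $\P\bigl(|\hb_\la-\be_\la|>\kappa\eta_{\la,\ga},\ |\hb_\la|>\eta_{\la,\ga}\bigr)\leq F_\la n^{-\ga}$ (Lemma \ref{nombredepoints}, i.e.\ assumption (A3) of the generic thresholding theorem the paper invokes). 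This weighting by $F_\la$ is what makes the infinite sums converge, and it is absent from your argument. A lesser remark: you posit a two-sided sandwich $\si^2_{jk}\leq\widetilde\si^2_{jk}\leq\kappa_1\si^2_{jk}+\kappa_2\norm{\p_\la}_\infty^2\ln n/n$ with probability $1-2n^{-\ga}$; the paper only proves the one-sided bound $\P(\si^2_{jk}\geq(1+\e')\widetilde\si^2_{jk})\leq Mn^{-\ga}$ (via Bernstein plus a sharp exponential inequality for the degenerate U-statistic in $\widehat\si^2_{jk}$) and controls the upper side through $\E(\eta_{\la,\ga}^2)$; your stronger event is plausible but would itself require the same U-statistic machinery, which you do not supply.
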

Note that Theorem \ref{inegoraclelavraie} holds with $c=1$ and $\ga>1$, as announced.
 Following the oracle point of view of Donoho and Johnstone, Theorem
\ref{inegoraclelavraie} shows that our procedure is optimal up to the logarithmic factor (and the negligible term $\log n/n$). This logarithmic term is in some sense unavoidable. It is the price we pay for adaptivity, i.e. the fact that we do not know the coefficients to keep. Note also that our result is true provided $f\in\L_2(\R)$. So, assumptions on $f$ are very mild here. This is not the case for
most of the results for non-parametric estimation procedures where one assumes that $\norm{f}_\infty<\infty$ and that $f$ has a compact support. Note in addition that this support and $\norm{f}_\infty$ are often known in the literature. On the contrary, in Theorem \ref{inegoraclelavraie}, $f$ and its support can be unbounded. So, we make as few assumptions as possible. This is allowed by considering random thresholding with the data-driven thresholds defined in (\ref{defthresh}).

\subsection{Calibration issues}
We address the problem of choosing conveniently the threshold parameter $\gamma$ from the theoretical point of view. The aim and the proofs are inspired by \citet{bm} who considered penalized estimators and calibrated constants for penalties in a Gaussian framework. In particular, they showed that if the penalty constant is smaller than 1, then the penalized estimator behaves in a quite unsatisfactory way. This study was used in practice to derive adequate data-driven penalties by \citet{leb}.

According to Theorem \ref{inegoraclelavraie}, we notice that 
 for any signal, taking $c=1$ and $c'=0$, we achieve the oracle performance up to a logarithmic term provided $\gamma>1$. So, our primary interest is to wonder what happens, from the theoretical point of view, when $\ga\leq 1$?

 To handle this problem, we consider the simplest signal in our setting and we compare the rates of convergence when $\ga>1$ and $\ga<1$.
\begin{Th}\label{lower}
Let $f=\indic_{[0,1]}$ and let us consider $\tilde{f}_{n,\ga}$ with the Haar basis, $c=1$ and $c'=0$.
\begin{itemize}
\item If $\ga>1$ then there exists a constant $C$ depending only on $\ga$ such that
$$\E(\norm{\tilde{f}_{n}-f}^2)\leq C\frac{\ln n}{n}.$$
\item If $\ga<1$, then there exists $\delta<1$ depending only on $\ga$ such that
$$\E(\norm{\tilde{f}_{n}-f}^2_2)\geq \frac{1}{n^{\delta}}(1+o_n(1)).$$
\end{itemize}
\end{Th}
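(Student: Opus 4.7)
The plan hinges on the observation that for $f = \indic_{[0,1]}$ and the Haar basis the wavelet decomposition is maximally sparse: $\be_{-1,0} = 1$, while for every $j \geq 0$ and $k \in \Z$ one has $\be_{jk} = 0$ since $\p_{jk}$ has mean zero on its support, which is either contained in $[0,1]$ or disjoint from it. Moreover $\si^2_{jk} = \|\p_{jk}\|_2^2 = 1$ whenever $\Supp(\p_{jk}) \subset [0,1]$, and since $X_i \in [0,1]$ a.s.\ we also get $\hb_{-1,0} = 1$ a.s.\ and $\hb_{-1,k} = 0$ for $k \neq 0$. Using that Haar is orthonormal and $\tp_{jk}=\p_{jk}$, the squared error reduces to
\[
\|\tilde f_{n,\ga} - f\|_2^2 = \sum_{j=0}^{j_0}\sum_{k=0}^{2^j - 1} \hb_{jk}^2 \indic_{\{|\hb_{jk}| \geq \eta_{jk}\}}.
\]

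For the upper bound ($\ga > 1$), I would just apply Theorem~\ref{inegoraclelavraie} with $c=1$ and $c'=0$. Every $\be_{jk}$ with $j \geq 0$ is zero and $\si^2_{-1,0} = 0$, so every term in the oracle sum of \eqref{inegoraclelavraie1} vanishes; only the $C_2\log n/n$ remainder survives, giving the claimed $\log n/n$ rate.

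For the lower bound ($\ga<1$), the strategy is to restrict the sum above to a single carefully chosen level $j^* = \lfloor \log_2(n/(\ln n)^A)\rfloor$ with a fixed $A$ (I would take $A = 4$). This choice has three effects: (i) the $2^{j^*}$ wavelets $\p_{j^*k}$, $k = 0,\ldots,2^{j^*}-1$, all fit inside $[0,1]$; (ii) $\|\p_{j^*k}\|_\infty^2 \ln n / n \approx 1/(\ln n)^{A-1}$ is small, which makes the linear part of $\eta_{j^*k}$ negligible beside the square-root part; and (iii) $n\cdot 2^{-j^*} \approx (\ln n)^A$ is large, so Bernstein's inequality applied to $\hat\si^2_{j^*k}$ yields $P(\hat\si^2_{j^*k} > 1+\e) \leq 2\exp(-\kappa(\ln n)^A)$ for every fixed $\e > 0$. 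On the resulting high-probability event $\Omega$, one has the deterministic upper bound $\eta_{j^*k}^2 \leq (\eta^*)^2$ with $(\eta^*)^2 = 2\ga(1+3\e)\ln n/n$ for $n$ large.

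The main obstacle is a matching \emph{lower} bound on $P(|\hb_{j^*k}| \geq \eta^*)$, since Bernstein-type bounds only deliver the opposite direction. I would write $\hb_{j^*k} = n^{-1}\sum_i \p_{j^*k}(X_i)$ as a normalized sum of iid centered bounded variables with unit variance (the nonzero values being $\pm 2^{j^*/2}$ each with probability $2^{-j^*-1}$), and invoke a Cram\'er-type moderate deviation theorem to get
\[
P\bigl(|\sqrt{n}\,\hb_{j^*k}| \geq \sqrt{2\ga''\ln n}\bigr) = 2\bar\Phi\bigl(\sqrt{2\ga''\ln n}\bigr)(1+o(1)) \geq \frac{\kappa'\, n^{-\ga''}}{\sqrt{\ln n}},
\]
where $\ga'' = \ga(1+3\e) < 1$ for $\e$ small; the moderate-deviation condition $\sqrt{\ln n} = o\bigl((n/\|\p_{j^*k}\|_\infty^2)^{1/6}\bigr)$ is satisfied as soon as $A > 3$. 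By linearity, summing the $2^{j^*}$ identically distributed coefficients,
\[
\E[\|\tilde f_n - f\|_2^2] \geq 2^{j^*}(\eta^*)^2 \bigl[P(|\hb_{j^*k}| \geq \eta^*) - P(\Omega^c)\bigr] \geq \kappa''\,\frac{n^{-\ga''}}{(\ln n)^{A-1/2}}.
\]
Finally, choosing any $\delta \in (\ga,1)$ and $\e$ small enough that $\ga'' < \delta$, the factor $n^{\delta-\ga''}$ swamps the polylogarithmic loss, yielding $\E[\|\tilde f_n - f\|_2^2] \geq n^{-\delta}(1+o_n(1))$ as required.
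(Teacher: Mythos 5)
Your upper-bound step is exactly the paper's: both simply apply Theorem \ref{inegoraclelavraie}, noting that all the oracle terms vanish for $f=\indic_{[0,1]}$ in the Haar basis. For the lower bound your overall architecture also matches the paper (pick one level $j^*$ with $n2^{-j^*}\asymp(\ln n)^{A}$, show the random threshold is at most $\sqrt{2\ga(1+O(\e))\ln n/n}$ on a high-probability event, then lower-bound the chance that a single empirical coefficient crosses it), but the key technical step is genuinely different: you invoke a Cram\'er-type moderate-deviation lower bound for $\sqrt n\,\hb_{j^*k}$, whereas the paper avoids any such citation and instead lower-bounds the crossing probability bare-handed, by exhibiting explicit integer values of the counts $N^{\pm}_{j1}$ that exactly attain the threshold and estimating the corresponding trinomial probability with Stirling's formula, which yields the $n^{-(\ga+2\e)}$ factor directly. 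Your route is shorter and more modular, but it carries two obligations: (i) the moderate-deviation theorem must be one valid for triangular arrays, since the summands $\p_{j^*k}(X_i)$ take values $\pm 2^{j^*/2}$ with probability $2^{-j^*-1}$ depending on $n$; Bernstein-condition versions (Petrov-type, with deviation zone $x=o(\sqrt{n2^{-j^*}})=o((\ln n)^{A/2})$ and vanishing Cram\'er-series correction, helped here by the symmetry of $\p_{j^*k}(X)$) do apply for your $A=4$, so your check ``$A>3$'' is in the right spirit, but the precise statement should be cited; (ii) your control of $\hat\si^2_{j^*k}$ is not literally ``Bernstein'', since $\hat\si^2_{jk}$ is a U-statistic --- you need the decomposition $\hat\si^2_{jk}=s_n-\tfrac{2}{n(n-1)}u_n$ and a separate bound on the degenerate part $u_n$ (the paper uses the Houdr\'e--Reynaud-Bouret inequality; even Chebyshev gives $O(n^{-2})$, which is enough since $\ga''<1$). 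With these two points made precise your argument closes, and in exchange for the external moderate-deviation input it spares the page of Stirling computations that the paper carries out explicitly.
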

Theorem \ref{lower}   establishes     that,    asymptotically,
$\tilde{f}_{n,\ga}$       with      $\gamma<1$       cannot      estimate a very simple signal ($f=\indic_{[0,1]}$)  at a convenient rate  of convergence.  This  provides a
lower bound for the threshold parameter $\gamma$: we have to take  $\ga\geq 1$.

\begin{figure}[h!]
\begin{center}
\hspace{-0.5cm} \includegraphics[width=0.8\linewidth,angle=0]{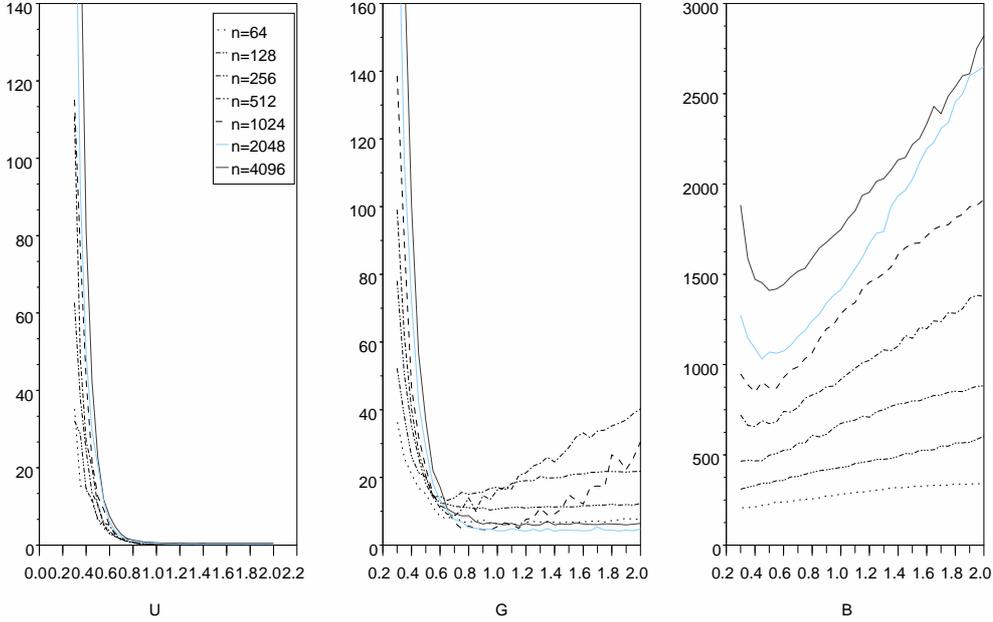}
\end{center}
\caption{$n\times MISE_n(\ga)$  for ({\bf U}) $f=\indic_{[0,1]}$ (the Haar basis is used) ; ({\bf G}) $f$ Gaussian density with mean 0.5 and standard deviation 0.25  (the Spline basis
is used) ; ({\bf B}) $f$ is the renormalized Bumps signal (the Spline basis is used)\label{tracegamma}
}
\end{figure}

We reinforce these results by a simulation study. First we
simulate 1000 n-samples of density $f=\indic_{[0,1]}.$ We
estimate $f$ by $\tilde{f}_n^{Prac}$ using the Haar basis, but to see the influence of the parameter $\gamma$ on the estimation, we
replace $\eta_{jk}^{Prac}$ (see {\bf{Step 2}} (\ref{defthreshprac})) by
\begin{equation}\label{defthreshpracavecgamma}
\eta_{\la}=\sqrt{2\gamma\widehat{\si}^2_{jk} \frac{\ln n}{n}} +\frac{2\gamma\norm{\p_{jk}}_\infty \ln n}{3n}.
\end{equation}
For any $\gamma$, we have
 computed $MISE_n(\gamma)$ i.e. the average over the 1000 simulations of $\norm{\tilde{f}_n^{Prac}-f}^2$.
On the left part of Figure  \ref{tracegamma} ({\bf U}),  $MISE_n(\gamma)\times n$ is plotted as a
  function of $\gamma$ for different values of $n$. Note that when $\gamma>1$, $MISE_n(\gamma)$ is null meaning that our procedure selects just one wavelet coefficient, the one associated to $\psi_{-1,0}=\indic_{[0,1]}$; all others are equal to zero. This fact remains true for a very large range of values of $\gamma$. This plateau phenomenon has already been noticed in the Poisson framework (see \citet{calib}). However as soon as $\gamma<1$,  $MISE_n(\gamma)\times n$ is positive and increases when $\gamma$ decreases. It also increases with $n$ tending to prove that $MISE_n(\gamma) >> 1/n$ for $\gamma<1$. This is in complete adequation with Theorem \ref{lower}. Remark that, from a theoretical point of view, the proof of part 2 of Theorem \ref{lower} holds for any choice of threshold that is asymptotically equivalent to $\sqrt{2\gamma\widehat{\si}^2_{jk} \frac{\ln n}{n}}$ in the heavy mass zone and in particular for the choice (\ref{defthreshpracavecgamma}). From a numerical point of view, the  left part of Figure \ref{tracegamma} ({\bf U}) would have been essentially the same with 
  $\eta_{\la,\ga}$, i.e. (\ref{defthresh}) instead of (\ref{defthreshpracavecgamma}).
The reason why we
used (\ref{defthreshpracavecgamma}) is the practical performance when the function $f$ is more irregular with respect to the chosen basis. Indeed we 
consider two other density functions $f$. The first one is the density of a Gaussian variable whose results appear in the middle part of Figure \ref{tracegamma} ({\bf G}) and the second one is the renormalized Bumps signal \footnote{~ The renormalized Bumps signal is a very irregular signal that is classically used in wavelet analysis. It is here renormalized so that the integral equals 1 and it can be defined by $\displaystyle\left(\sum_jg_j\left(1+\frac{|x-p_j|}{w_j}\right)^{-4}\right)\frac{{\bf 1}_{[0,1]}}{0.284}$ with
\begin{center}
\begin{tabular}{ccccccccccccccc}
  p & =  & [ & 0.1 & 0.13 & 0.15 &0.23 &0.25& 0.4  & 0.44 & 0.65 & 0.76 &0.78  & 0.81 &  ]  \\
 g  & = & [ & 4  & 5 & 3 & 4 & 5 & 4.2 & 2.1 & 4.3  &3.1  & 5.1  & 4.2  & ] \\
  w&=  &[  &0.005  & 0.005 &0.006  &0.01  &0.01  &0.03  &0.01  &0.01  &0.005  &0.008  &0.005  & ]
\end{tabular}
\end{center}
} whose results appear in the right part
of Figure \ref{tracegamma} ({ \bf B}). In both cases we computed $\tilde{f}_n^{Prac}$ with the Spline basis : this basis is a particular possible choice of the wavelet basis which leads to smooth estimates. A description is available in Figure \ref{Spline} of Appendix A. We computed the associate
$MISE_n(\gamma)$
over 100 simulations.
Note that for the Bumps signal, there is no plateau phenomenon and that the best choice for $\gamma$ is $\gamma=0.5$ as soon as the highest level of resolution, $j_0(n)$ is high enough to capture the irregularity of the signal. If $n$ is too small, the best choice is to keep all the coefficients.
As already noticed in \citet{calib}, there exists in fact two behaviors : either the oracle $\bar{f}_n$ is close to $f$ and the best possible choice is $\gamma\simeq 1$ with a plateau phenomenon, or the oracle $\bar{f}_n$ is far from  $f$ and it is better to take a smaller $\gamma$ (for instance $\gamma=0.5$).
The Gaussian density ({\bf G}) exhibits both behaviors. For large $n$ ($n\geq 1024$), 
there is a plateau phenomenon around $\gamma=1$. But for smaller $n$, the oracle $\bar{f}_n$ is not accurate enough and taking $\gamma=0.5$ is better.
Note finally that the choice $\gamma=1$, leading to our practical method, namely $\tilde{f}_n^{Prac}$, is the more robust with respect to both situations.

\section{The curse of support from a minimax point of view}

The goal of this section is to derive the minimax rates on the whole class of Besov spaces.  The subsequent results will constitute generalizations of the results derived in \citet{jll} who pointed out minimax rates for density estimation on the class of H\"older spaces. For this purpose, we  consider the theoretical procedure $\tilde f_{n,\ga}$ defined with the choice $c'=-c$ (see {\bf{Step 0}}) where
the real number  $c$ is  chosen later. 
In some situations, it will be necessary to strengthen our assumptions. More precisely, sometimes, we assume that $f$ is bounded. So, for any $R>0$, we consider the following set of functions:
$${\cal  L}_{2,\infty}(R)=\left\{f \mbox{ is a density such that } \norm{f}_2\leq
R \mbox{ and } \norm{f}_{\infty}\leq R\right\}.$$
The Besov balls we consider are classical (see Appendix A for a definition with respect to the biorthogonal wavelet basis) and  denoted ${\cal B}^{\al}_{p,q}(R)$. Let us just point out that no restriction is made on the support of $f$ when $f$ belongs to ${\cal B}^{\al}_{p,q}(R)$: this support is potentially the whole real line.
Now, let us state the upper bound of the $\L_2$-risk of $\tilde f_{n,\ga}$.
\begin{Th}\label{minimaxnoncompact}
Let $R,R'>0$, $1\leq p,q\leq \infty$ and
$\al\in\R$ such that $\max\left(0,\frac{1}{p}-\frac{1}{2}\right)<\al<r+1$, where we recall that $r$ ($r>0$) denotes the wavelet smoothness parameter introduced in
Appendix A.
Let $c\geq 1$ such that
\begin{equation}\label{sural}
\al\left(1-\frac1{c(1+2\alpha)}\right)\geq \frac1p -\frac12
\end{equation}
and $\ga >c$. Then, there exists a constant $C$ depending on $R'$, $\ga$, $c$, on the parameters of the Besov ball and on the choice of the biorthogonal wavelet basis such that for any $n$,
\begin{itemize}
 \item[-] if $p\leq 2$,
\begin{equation}\label{risqueBesov1}
\sup_{f\in       {\cal       B}^{\al}_{p,q}(R)\cap{\cal L}_{2,\infty}(R')}\E\left[\norm{\tilde{f}_{n,\ga}-f}^2\right]\leq
C\left(\frac{\ln n}{n}\right)^{\frac{2\al}{2\al+1}},
\end{equation}
\item [-] if $p> 2$,
\begin{equation}\label{risqueBesov2}
\sup_{f\in {\cal B}^{\al}_{p,q}(R)\cap \L_2(R')}\E\left[\norm{\tilde{f}_{n,\ga}-f}^2\right]\leq
C\left(\frac{\ln n}{n}\right)^{\frac{\al}{\al+1-\frac{1}{p}}}.
\end{equation}
\end{itemize}
\end{Th}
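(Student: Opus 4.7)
The plan is to apply the oracle inequality of Theorem~\ref{inegoraclelavraie} and bound each of its two contributions over the Besov ball. Writing the oracle bound schematically as
$$\E\|\tilde f_{n,\ga}-f\|_2^2 \leq C_1(M+B) + C_2\ln n/n,$$
with $M=\sum_{(j,k)\in\Ga_n}\min(\be_\la^2,\ln n\cdot\si_{jk}^2/n)$ and $B=\sum_{j>j_0}\sum_k\be_\la^2$, the task is to dominate $M$ and $B$ by the stated rates. The essential input is the Besov characterization $\sum_k|\be_\la|^p\leq C 2^{-jp(\al+1/2-1/p)}$, which for $p\leq 2$ upgrades via Besov embedding to $\sum_k\be_\la^2\leq C 2^{-2j\tilde\al}$ with $\tilde\al=\al-\max(0,1/p-1/2)$.

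First, I would control the truncation bias $B$. With $j_0\approx c\log_2(n/\ln n)$ (from the choice $c'=-c$), the embedding bound above gives $B\leq C(\ln n/n)^{2c\tilde\al}$ when $p\leq 2$; a short calculation shows that condition (\ref{sural}) is precisely equivalent to $2c\tilde\al\geq 2\al/(2\al+1)$, so $B$ is of smaller order than the target rate. For $p>2$, the analogous requirement $2c\al\geq \al/(\al+1-1/p)$ is automatic because $c\geq 1$ and $2(\al+1-1/p)\geq 1$.

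Next, for $M$, the idea is to split at a cutoff level $j^*$ chosen optimally. For low levels $j\leq j^*$, use $\min(a,b)\leq b$ together with the pointwise estimate $\sum_k\p_\la^2\leq C 2^j$ (valid for compactly supported wavelets) and $\int f=1$ to obtain $\sum_k\si_{jk}^2\leq C 2^j$, hence $M_{\leq j^*}\leq C 2^{j^*}\ln n/n$. For high levels $j>j^*$ in the case $p\leq 2$, the bound $\si_{jk}^2\leq\|f\|_\infty\leq R'$ from the assumption $f\in{\cal L}_{2,\infty}(R')$ lets us replace the data-dependent threshold by $T^2=R'\ln n/n$, after which the elementary inequality $\min(\be^2,T^2)\leq|\be|^p T^{2-p}$ (valid only for $p\leq 2$) combined with the Besov $\ell^p$ bound produces a geometric sum dominated by $(\ln n/n)^{(2-p)/2}2^{-j^*p(\al+1/2-1/p)}$. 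Balancing with $2^{j^*}\sim(n/\ln n)^{1/(2\al+1)}$ yields the rate of (\ref{risqueBesov1}).

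The $p>2$ case is the genuine obstacle. On infinite support the embedding $B^\al_{p,q}(\R)\hookrightarrow B^\al_{2,q}(\R)$ fails, so the $\ell^2$ control on the wavelet coefficients must be replaced by the global bound $\sum_k\be_\la^2\leq\|f\|_2^2\leq R'^2$. The plan is to split each level's coefficients according to whether $|\be_\la|$ exceeds $T_\la$: the large ones are enumerated using the $\ell^p$ Besov bound and contribute $\lesssim T^{2-p}\sum_k|\be_\la|^p$, while the small ones are absorbed into the variance piece through a recalibrated cutoff $2^{j^*}\sim(n/\ln n)^{1/(2(\al+1-1/p))}$. Additional care is needed to bound $\si_{jk}^2$ itself without an $L^\infty$ assumption on $f$, which can be done via $\si_{jk}^2\leq\|\p_\la\|_\infty\int|\p_\la|f$ combined with Cauchy--Schwarz against $\|f\|_2$. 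This rebalancing produces the slower rate $(\ln n/n)^{\al/(\al+1-1/p)}$ of (\ref{risqueBesov2}), quantifying the curse of support in the dense regime.
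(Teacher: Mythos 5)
Your overall strategy---deriving both rates from the oracle inequality of Theorem \ref{inegoraclelavraie}---is exactly the route the paper takes (it omits the details, pointing to the analogous Poisson arguments in \citet{Poisson_minimax}), and your treatment of the case $p\leq 2$ is correct: the bias beyond $j_0$ is exactly what condition (\ref{sural}) is designed to control, and the split at $2^{j^*}\sim (n/\ln n)^{1/(2\al+1)}$ combined with $\min(\be_\la^2,T^2)\leq |\be_\la|^p T^{2-p}$ and $\si_{jk}^2\leq \norm{f}_\infty\leq R'$ gives $(\ln n/n)^{2\al/(2\al+1)}$.

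The case $p>2$, which is the substance of the theorem, has a genuine gap. First, your bias bound $B\leq C(\ln n/n)^{2c\al}$ presupposes the level-wise estimate $\sum_k\be_\la^2\leq C2^{-2j\al}$, i.e.\ precisely the embedding ${\cal B}^{\al}_{p,q}\hookrightarrow{\cal B}^{\al}_{2,q}$ that you yourself note fails on $\R$; the substitute you invoke, $\sum_k\be_\la^2\leq\norm{f}_2^2\leq R'^2$, carries no decay in $j$ and therefore yields no rate for $\sum_{j>j_0}\sum_k\be_\la^2$. Second, the splitting of the min-sum does not close as written: for $p>2$ the large-coefficient count gives $T^{2-p}\sum_k|\be_\la|^p$ with $T^{2-p}\to\infty$, so it is only useful at high levels, while the small coefficients still require a level-wise $\ell_2$ or $\ell_1$ control; moreover the cutoff $2^{j^*}\sim(n/\ln n)^{1/(2(\al+1-1/p))}$ does not balance, since the low-level variance term $2^{j^*}\ln n/n$ then has exponent $(2\al+1-2/p)/(2\al+2-2/p)$, whereas the high-level remainder exceeds the target $(\ln n/n)^{\al/(\al+1-1/p)}$. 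The missing ingredient is the $\ell_1$ information coming from $f$ being a density: since $\p$ is compactly supported with bounded overlap, $\sum_k|\be_\la|\leq C2^{j/2}\int f=C2^{j/2}$, and interpolating this with the Besov control $\norm{(\be_\la)_k}_{\ell_p}\leq C2^{-j(\al+\frac12-\frac1p)}$ yields $\sum_k\be_\la^2\leq C2^{-j\al/(1-1/p)}$. With this decay the tail bias is handled for any $c\geq 1$ (one needs $c(\al+1-\frac1p)\geq 1-\frac1p$, which is automatic), the high levels of the min-sum are summable, and balancing against $2^{j^*}\ln n/n$ at $2^{j^*}\sim(n/\ln n)^{(1-1/p)/(\al+1-1/p)}$ gives exactly the rate $(\ln n/n)^{\al/(\al+1-1/p)}$ of (\ref{risqueBesov2}); your Cauchy--Schwarz bound $\si_{jk}^2\leq C2^{j/2}\norm{f}_2$ is the right observation for the variance side, but it must be paired with this $\ell_1$-based argument on the coefficient side.
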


First, let us briefly comment assumptions of these results. When $p> 2$, (\ref{sural}) is satisfied and the result is true for any $c\geq 1$ and $0<\al<r+1$. In addition, we do not need to restrict ourselves to the set of bounded functions. When $p\leq 2$, the result is true as soon as $c$ is large enough to satisfy (\ref{sural}) and we establish (\ref{risqueBesov1}) only for bounded functions. Actually, this assumption is in some sense unavoidable as proved in Section 6.4 of \citet{birdensity}.

Furthermore, note that if we additionally assume that $f$ is bounded with a bounded support (say $[0,1]$) then $\E\left[\norm{\tilde{f}_{n,\ga}-f}^2\right]$ is always upper bounded by a constant times $\left(\ln n /n\right)^{\frac{2\al}{2\al+1}}$ whatever $p$ is, since, in this case,  the assumption $f\in {\cal   B}^{\al}_{p,\infty}(R)$ implies $f\in{\cal
B}^\al_{2,\infty}(\tilde R)$ for $\tilde R$ large enough and $p>2$.

Now, combining upper bounds (\ref{risqueBesov1}) and (\ref{risqueBesov2}), under assumptions of Theorem \ref{minimaxnoncompact}, we point out the following rate for our procedure when $f$ is bounded but without any assumption on the support:
$$
\sup_{f\in       {\cal       B}^{\al}_{p,q}(R)\cap{\cal L}_{2,\infty}(R')}\E\left[\norm{\tilde{f}_{n}-f}^2\right]\leq
C\left(\frac{\ln n}{n}\right)^{\frac{\al}{\al+\frac{1}{2}+\left(\frac{1}{2}-\frac{1}{p}\right)_+}}.
$$
The following result derives lower bounds of the minimax risk showing that this rate is the optimal rate up to a logarithmic term. So, the next result establishes the optimality properties of $\tilde{f}_{n,\ga}$ under the minimax approach.
\begin{Th}\label{lowerBesov} Let $R,R'>0$, $1\leq p,q\leq \infty$ and
$\al\in\R$ such that $\max\left(0,\frac{1}{p}-\frac{1}{2}\right)<\al<r+1$. Then, there exists a positive constant $\tilde C$ depending on $R'$ and  on the parameters of the Besov ball such that
$$\lim\inf_{n\to +\infty}n^{\frac{\al}{\al+\frac{1}{2}+\left(\frac{1}{2}-\frac{1}{p}\right)_+}}\inf_{\hat f}\sup_{f\in       {\cal       B}^{\al}_{p,q}(R)\cap{\cal L}_{2,\infty}(R')}\E\left[\norm{\hat{f}-f}^2\right]\geq
\tilde C,$$
where the infimum is taken over all the possible density estimators $\hat{f}$.

\noindent Furthermore, let $c$, $p^*\geq 1$ and $\al^*>0$ such that
\begin{equation}\label{adapt*}
\al^*\left(1-\frac1{c(1+2\alpha^*)}\right)\geq \frac{1}{p^*} -\frac12.
\end{equation}
 Then our procedure, $\tilde{f}_{n,\ga}$, constructed with this precise choice of $c$ and $\ga>c$, is adaptive minimax up to a logarithmic term on $$\left\{{\cal
B}^{\al}_{p,q}(R)\cap{\cal  L}_{2,\infty}(R'):\quad   \al^*\leq\al<  r+1,  \
p^*\leq p\leq +\infty,\ 1\leq
q\leq\infty\right\}.$$
\end{Th}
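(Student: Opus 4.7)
The plan is to establish the minimax lower bound by hypothesis-testing techniques, treating the dense regime $p\leq 2$ and the sparse regime $p>2$ separately, and then to deduce adaptivity from the matching upper bound of Theorem \ref{minimaxnoncompact}. Throughout I use the wavelet characterization of Besov norms from Appendix A together with a smooth base density $f_0$ bounded above and below on its support.

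For $p\leq 2$, the target rate is $n^{-2\al/(2\al+1)}$. I apply Assouad's lemma to the classical hypercube family $f_\e = f_0 + \delta_j \sum_{k\in K_j} \e_k \psi_{jk}$, $\e\in\{-1,+1\}^{|K_j|}$, where $f_0$ is smooth with compact support, $K_j$ indexes wavelets supported in the interior of $\Supp(f_0)$, and $\delta_j\asymp 2^{-j(\al+1/2)}$ saturates the Besov constraint and keeps $f_\e\in\L_2(R')\cap\L_\infty(R')$ for $j$ large. The Hellinger squared distance between neighbouring $n$-fold products is bounded by $Cn\delta_j^2/\inf_{\Supp(f_0)} f_0$, and optimizing $2^j\asymp n^{1/(2\al+1)}$ yields the rate. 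This part is entirely classical.

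For $p>2$, the rate $n^{-\al/(\al+1-1/p)}$ reflects the curse of support: the worst-case Besov function concentrates a moderate number of wavelet coefficients with large amplitude, and the statistician must locate them within a support that may be arbitrarily large. I construct the hypercube $f_\e = f_0 + \delta \sum_{m=1}^M \e_m \psi_{j,k_m}$, $\e\in\{-1,+1\}^M$, where the indices $k_m$ are chosen so that the $\psi_{j,k_m}$ have pairwise disjoint supports contained in $\Supp(f_0)$; the density $f_0$ is supported on an interval of length $L$ on which it is bounded below by a constant multiple of $1/L$; and $\delta\asymp 2^{-j(\al+1/2-1/p)}M^{-1/p}$ saturates the $B^\al_{p,q}$ constraint when $M$ coefficients are active. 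Assouad's lemma then demands $n\delta^2/\inf f_0\lesssim 1$ per flipped coordinate, positivity forces $\delta 2^{j/2}\lesssim \inf f_0$, the $\L_\infty$ bound $R'$ requires $\inf f_0 + \delta 2^{j/2}\leq R'$, and the number $M$ of active locations is bounded by the $\asymp L 2^j$ wavelets at scale $j$ fitting inside $\Supp(f_0)$. The core computation is to jointly choose $j$, $M$ and $L$ so that these four constraints are simultaneously tight; the resulting Assouad risk $\asymp M\delta^2$ then equals $n^{-\al/(\al+1-1/p)}$ up to constants. This simultaneous balancing, and in particular ensuring that $L$ is large enough to host $M$ disjoint bumps while $\inf f_0$ remains large enough for the Kullback control to be effective, is where the absence of a support restriction is used decisively and is the main technical step.

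For the adaptivity claim, I observe that condition (\ref{sural}) is monotone in the right direction: the map $\al\mapsto \al(1-1/(c(1+2\al)))$ has derivative $1-1/[c(1+2\al)^2]>0$ for $c\geq 1$, hence is strictly increasing, and $p\mapsto 1/p-1/2$ is strictly decreasing. Therefore, if $(\al^*,p^*)$ satisfies (\ref{adapt*}) with a given $c$, then every $(\al,p)$ with $\al\geq\al^*$ and $p\geq p^*$ satisfies (\ref{sural}) with the same $c$. Applying Theorem \ref{minimaxnoncompact} with this fixed $c$ and $\ga>c$ then provides the uniform upper bound $(\ln n/n)^{\al/(\al+1/2+(1/2-1/p)_+)}$ over the announced scale of Besov balls, matching the lower bound up to the logarithmic factor and completing the proof of the adaptive minimax statement.
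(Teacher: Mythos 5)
The paper itself does not spell out a proof of this theorem: it is stated with the remark that the arguments are analogous to those of the companion Poisson paper, the upper (adaptivity) part being a consequence of the oracle inequality of Theorem \ref{inegoraclelavraie} via Theorem \ref{minimaxnoncompact}. Your proposal follows exactly that intended route: Assouad-type hypercubes for the two regimes, with the $p>2$ bound obtained by letting the support length $L$ grow, and adaptivity read off from Theorem \ref{minimaxnoncompact} after checking that (\ref{sural}) propagates from $(\al^*,p^*)$ to all $\al\geq\al^*$, $p\geq p^*$ (your monotonicity computation $\frac{d}{d\al}\,\al\bigl(1-\tfrac1{c(1+2\al)}\bigr)=1-\tfrac1{c(1+2\al)^2}>0$ is correct). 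Your balancing in the sparse regime does close: saturating the Besov constraint with $M\asymp L2^j$ disjoint bumps of amplitude $\delta\asymp 2^{-j(\al+1/2)}L^{-1/p}$, imposing the testing condition $n\delta^2L\lesssim 1$ and the positivity condition $\delta 2^{j/2}\lesssim 1/L$ forces $2^j\asymp n^{(p-1)/(p-1+\al p)}$ and yields risk $M\delta^2\asymp 2^j/n\asymp n^{-\al/(\al+1-1/p)}$, as claimed.

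Two repairs are needed to make the sketch rigorous. First, you cannot build the perturbations from the decomposition wavelets $\psi_{jk}$ of this paper: they are piecewise constant, so $f_0+\delta\sum_m\e_m\psi_{jk_m}$ does not belong to ${\cal B}^{\al}_{p,q}$ once $\al\geq 1/p$, and moreover its coefficients in the sense of (\ref{decom}) are not the $\delta\e_m$. You should perturb with the smooth reconstruction wavelets $\tilde\psi_{jk}$ (or any $C^{r+1}$ compactly supported orthonormal wavelet), for which biorthogonality gives $\int f_\e\,\psi_{j'k'}=\delta\e_m$ exactly at the perturbed indices and the Besov membership holds for all $\al<r+1$; the positivity and $\L_\infty$ constraints are then stated with $\|\tilde\psi\|_\infty$. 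Second, the balancing computation, which you describe as ``the main technical step'' but only assert, should be written out (as above) together with the verification that $f_0$ itself, of height $\asymp 1/L$ on an interval of length $L$, satisfies the ${\cal B}^{\al}_{p,q}(R)$ and ${\cal L}_{2,\infty}(R')$ constraints uniformly in $L$. With these adjustments the argument is complete and is, in substance, the one the authors delegate to the cited reference.
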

When $p\leq 2$, the lower bound for the minimax risk corresponds to
the classical minimax rate  for estimating a compactly supported density (see \citet{djkp}). In addition, the procedure $\tilde f_{n,\ga}$ achieves this minimax rate up to a logarithmic term. When $p>2$, the  risk deteriorates, if no assumption on the support is made, whereas it remains the same when we add the bounded support assumption. Note that when $p=\infty$, the exponent becomes
$\al/(1+\al)$: this rate was also derived in \citet{jll} for estimation on balls of ${\cal B}^{\al}_{\infty,\infty}$.

To summarize, we gather in Table \ref{tableau} the lower bounds for the minimax rates  obtained for  each situation. Those bounds are adaptively achieved by  our estimator with respect to $p$, $\al$ and the compactness of the support, up to a logarithmic term. If the logarithmic term is known to be unnecessary in the bounded support case, the question remains open in the other case.
\begin{table}[htb]
\begin{center}
\begin{tabular}{|l||c|c|}
\hline
&$1\leq p\leq 2$&$2\leq p\leq \infty$\\
\hline\hline
&&\\
compact support&$n^{-\frac{2\al}{2\al+1}}$&$n^{-\frac{2\al}{2\al+1}}$\\
\hline
&&\\
non compact support&$n^{-\frac{2\al}{2\al+1}}$&$n^{-\frac{\al}{\al+1-\frac{1}{p}}}$\\
\hline
\end{tabular}
\caption{Minimax rates on ${\cal
B}^{\al}_{p,q}\cap{\cal  L}_{2,\infty}$(up to a logarithmic term)  with $1\leq p,q\leq \infty$, $\al>\max\left(0,\frac{1}{p}-\frac{1}{2}\right)$  under the $\|\cdot\|_2^2$-loss.}\label{tableau}\end{center}
\end{table}

Our results show the role played by the support of the functions to be estimated on minimax rates. As already observed, when $p\leq 2$, the support has no influence since the rate exponent remains unchanged whatever the size of the support (finite or not). Roughly speaking, it means that it is not harder
to estimate bounded non-compactly supported  functions than bounded compactly supported functions from the minimax point of view. It is not the case when $p>2$. Actually, we note an elbow phenomenon at $p=2$ and the rate deteriorates when $p$ increases: this illustrates the curse of support from a minimax point of view. Let us give an interpretation of this observation. \citet{joh} showed that when $p<2$, Besov spaces ${\cal B}^{\al}_{p,q}$ model sparse signals where at each level, a very few number of the wavelet coefficients are non-negligible. But these coefficients can be very large. When $p>2$, ${\cal B}^{\al}_{p,q}$-spaces typically model dense signals where the wavelet coefficients are not large but most of them can be non-negligible. This explains why the size of the support plays a role on minimax rates when $p>2$: when the support is larger, the number of wavelet coefficients to be estimated increases dramatically.

Since arguments for proving Theorems \ref{minimaxnoncompact} and \ref{lowerBesov} are similar to the arguments used in \citet{Poisson_minimax}, proofs are omitted. We just mention that these results are derived from the oracle inequality established in Theorem \ref{inegoraclelavraie}.


\section{The curse of support from a practical point of view}
Now let us turn to a practical point of view. Is there a curse of support too? First we provide a simulation study illustrating the distortion of the most classic support dependent estimators when the support or the tail is increasing. Next we provide an application of our method to famous real data sets, namely the Suicide data and the Old Faithful geyser data.

\subsection{Simulations}
We compare our method to representative methods of each main trend in density estimation, namely kernel,  binning plus thresholding and model selection.
 The considered methods are the following. The first one is  the kernel method, denoted {\bf K}, consisting in a basic cross-validation choice of a global bandwidth with a Gaussian kernel.
The second method requires a complex preprocessing of the data based on binning. Observations $X_1,\ldots,X_n$ are first rescaled and centered by an affine transformation denoted $T$ such that $T(X_1),\ldots,T(X_n)$ lie in $[0,1]$. We denote $f_T$ the density of the data induced by the transformation $T$. We divide the interval $[0,1]$ into $2^{b_n}$ small intervals of size $2^{-b_n}$, where $b_n$ is an integer, and count the number of observations in each interval. We apply the root transform due to \citet{cai} and the universal hard individual thresholding rule on the coefficients computed with the DWT Coiflet-basis filter. We finally apply the unroot transform to obtain an estimate of $f_T$ and the final estimate of the density is obtained by applying $T^{-1}$ combined with a spline interpolation. This method is denoted {\bf RU}.
The last method is also support dependent. After rescaling as previously the data, we estimate $f_T$ by the algorithm of \citet{wn}. It consists in a complex  selection of a grid and of polynomials on that grid that minimizes a penalized  loglikelihood criterion.  The final estimate of the density is obtained by applying $T^{-1}$. This method is denoted {\bf WN}.

\noindent
Our practical method is implemented in the Haar basis (method {\bf H}) and in the Spline basis (method {\bf S})(see Figure \ref{Spline} in Appendix A for a complete description of this basis). Moreover  we have also implemented the choice $\gamma=0.5$ of (\ref{defthreshpracavecgamma}) in the Spline basis (see Section \ref{method}). We denote this method {\bf S*}.\\
  The thresholding rule proposed in \citet{jll} has also been considered. For their prescribed  practical choice of the tuning parameters and the Spline basis, the numerical performance is similar to those of method {\bf S}. 
Since thresholding is not performed for the coarsest level, the approximation term of the reconstruction is based on many non zero negligible coefficients for heavy-tailed signals: this  leads to obvious numerical difficulties without significant impact on the risk. So, numerical results of the thresholding rule proposed in \citet{jll} are not given in the sequel.

We generate $n$-samples of two kinds of densities $f$, with $n=1024$. Both signals are supported by the whole real line. We compute for each estimator $\hat{f}$ the 
ISE, i.e. $\int_\R (f-\hat{f})^2$ which is approximated by a trapezoidal method on a finite interval, adequately chosen so that the remaining term is negligible with respect to the ISE.

The first signal, $g_d$, consists in a mixture of two standard Gaussian densities:
$$g_d=\frac{1}{2}\, \mathcal{N}(0,1)+\frac12\, \mathcal{N}(d,1),$$
where $\mathcal{N}(\mu,\si)$ represents the density of a Gaussian variable with mean $\mu$ and standard deviation $\si$.
The parameter $d$ varies in $\{10,30,50,70\}$ so that we can see the curse of support on the quality of estimation.
\begin{figure}[h!]
\begin{center}
\hspace{-0.5cm} \includegraphics[width=0.8\linewidth,angle=0]{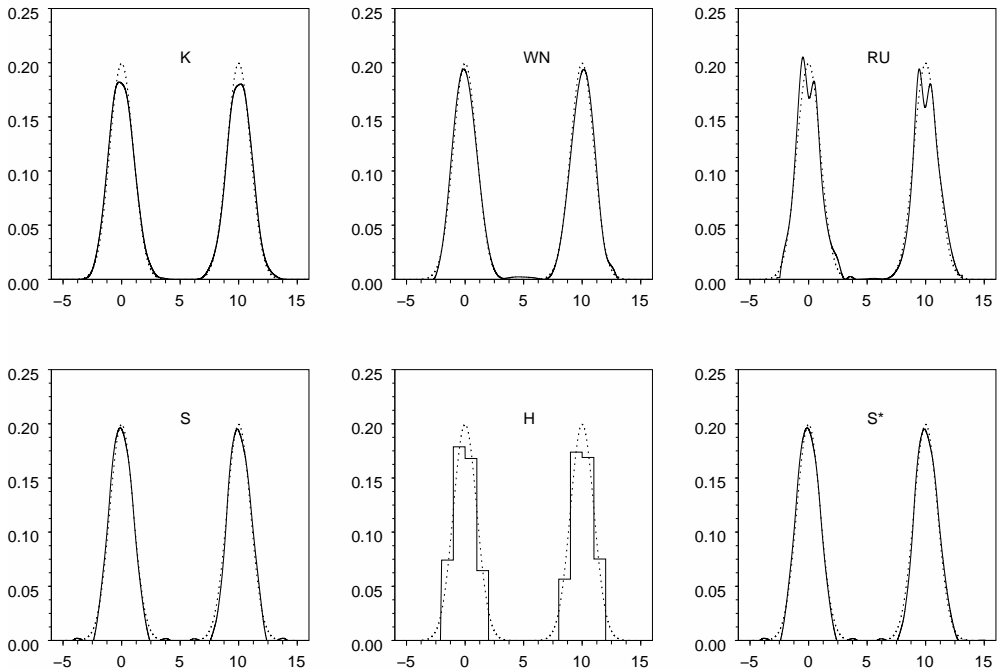}
\end{center}
\caption{Reconstruction of $g_d$ (true: dotted line, estimate: solid line) for the 6 different methods for $d=10$}
\label{ecarte10}
\end{figure}

\begin{figure}[h!]
\begin{center}
\hspace{-0.5cm} \includegraphics[width=0.78\linewidth,angle=0]{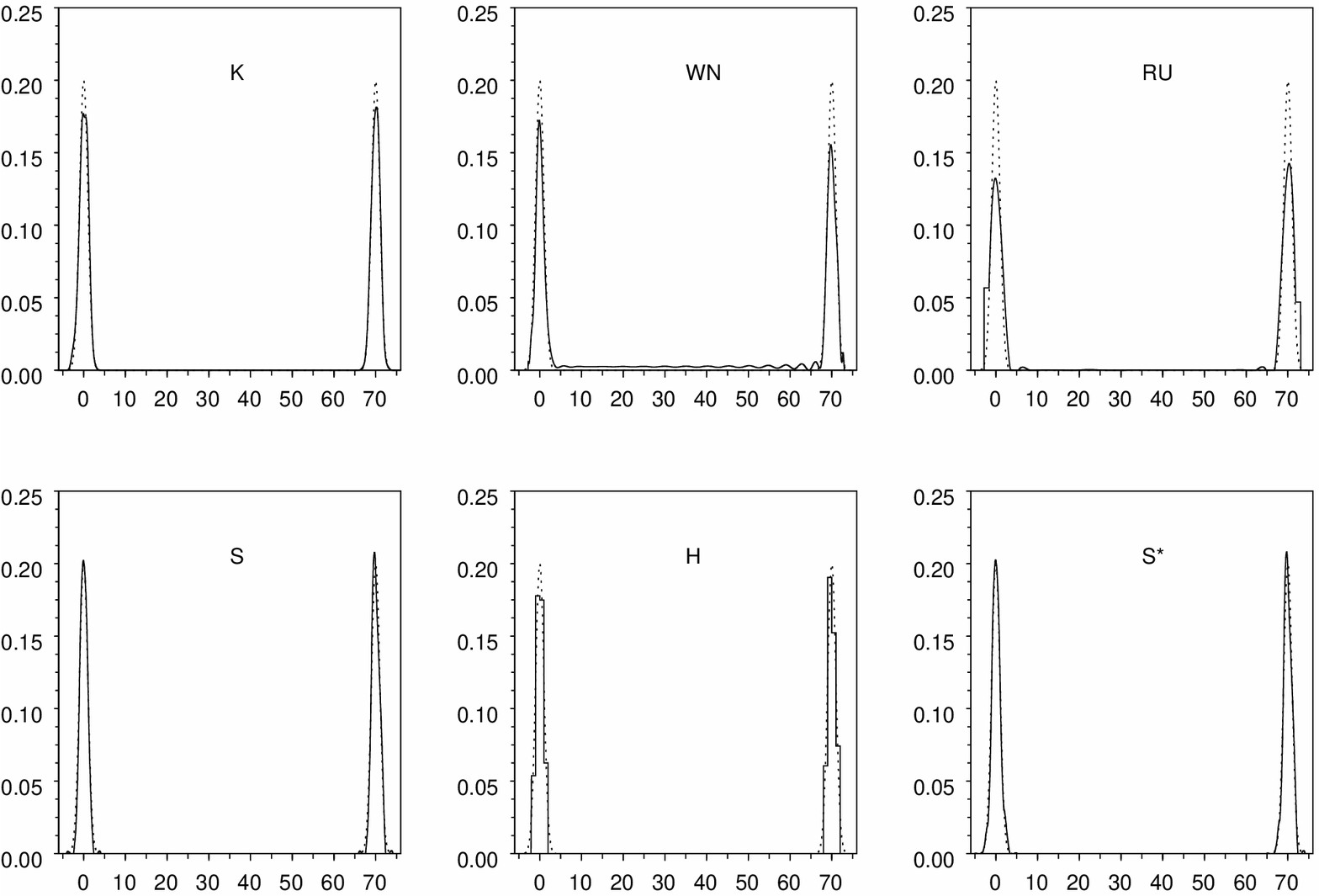}
\end{center}
\caption{Reconstruction of $g_d$ (true: dotted line, estimate: solid line) for the 6 different methods for $d=70$}
\label{ecarte70}
\end{figure}

\begin{figure}[h!]
\begin{center}
\hspace{-0.5cm} \includegraphics[width=0.48\linewidth,angle=-90]{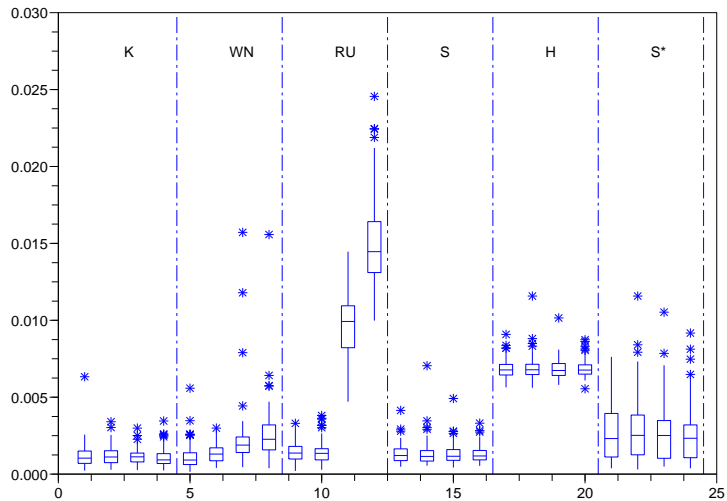}
\end{center}
\caption{Boxplot of the ISE for $g_d$ over 100 simulations for the 6 methods and the 4 different values of $d$. A column, delimited by dashed lines, corresponds to one method (respectively {\bf K, WN, RU, S, H, S*}). Inside this column, from left to right, one can find for the same method  the boxplot of the ISE for respectively $d=10, 30, 50$ and $70.$ }\label{boxecart}
\end{figure}

Figure \ref{ecarte10} shows the reconstructions for $d=10$ and Figure \ref{ecarte70} for $d=70$. In the sequel,  the method {\bf RU} is implemented  with $b_n=5$, which is the best choice for the reconstruction with $d=10$. All the methods give satisfying results for $d=10$. When $d$ is large, the rescaling and binning preprocessing leads to a poor regression signal which makes the regression thresholding rules non convenient, as illustrated by the method {\bf RU} with $d=70$. Reconstructions for  {\bf K, WN, S} and {\bf S*} seem satisfying but a study of the ISE of each method (see Figure \ref{boxecart}) reveals that both support dependent methods ({\bf RU} and {\bf WN}) have a risk that increases with $d$. On the contrary, methods {\bf K} and {\bf S} are the best ones and more interestingly their performance does not vary with $d$. This robustness is also true for {\bf H} and {\bf S*}. {\bf S*} is a bit undersmoothing: this was already noticed in Figure \ref{tracegamma} ({\bf G}) and this explains the variability of its ISE. Finally note that, for large $d$, {\bf H} is even better than {\bf RU} despite the inappropriate choice of the Haar basis.

The other signal, $h_k$, is both heavy-tailed and irregular. It consists in a mixture of 4 Gaussian densities and one Student density:
$$h_k=0.45\, T(k)+ 0.15\,\mathcal{N}(-1,0.05) + 0.1\,\mathcal{N}(-0.7,0.005)+ 0.25\,\mathcal{N}(1,0.025) + 0.15\,\mathcal{N}(2,0.05),$$
where $T(k)$ denotes the density of a Student variable with $k$ degrees of freedom.
The parameter $k$ varies in $\{2,4,8,16\}$. The smaller $k$, the heavier the tail is and this without changing the shape of the main part that has to be estimated. Figure \ref{herisson}  shows the reconstruction for $k=2$. Clearly {\bf RU} does not detect the local spikes at all. Indeed the maximal observation may be equal to $1000$ and the binning effect is disastrous. The kernel method {\bf K} clearly suffers from a lack of spatial adaptivity, as expected. The four remaining methods seem satisfying. In particular for this very irregular signal it is not clear that the Haar basis is a bad choice. Note however that to represent reconstructions, we   have
focused on the area where the spikes are located. In particular the support dependent method {\bf WN} is non zero on a very large interval, which tends to deteriorate its ISE. Indeed, Figure \ref{boxherisson} shows that the ISE of the support dependent methods ({\bf RU, WN}) increases when the tail becomes heavier, whereas the other methods have remarkable stable ISE. Methods {\bf S} and {\bf H} are more robust and better than {\bf WN} for $k=2$. The ISE may be improved for this irregular signal by taking $\gamma=0.5$ (see method {\bf S*}) as  already noticed  in Section 2 for irregular signals.

\begin{figure}[h!]
\begin{center}
\hspace{-0.5cm} \includegraphics[width=0.78\linewidth,angle=0]{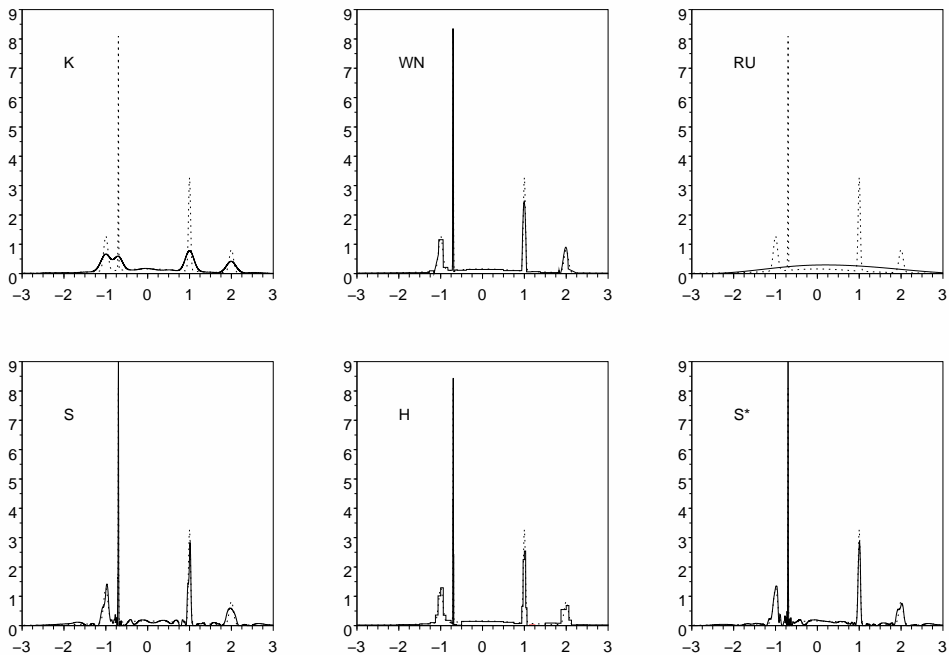}
\end{center}
\caption{Reconstruction of $h_k$ (true: dotted line, estimate: solid line) for the 6 different methods for $k=2$}\label{herisson}
\end{figure}

\begin{figure}[h!]
\begin{center}
\hspace{-0.5cm} \includegraphics[width=0.48\linewidth,angle=-90]{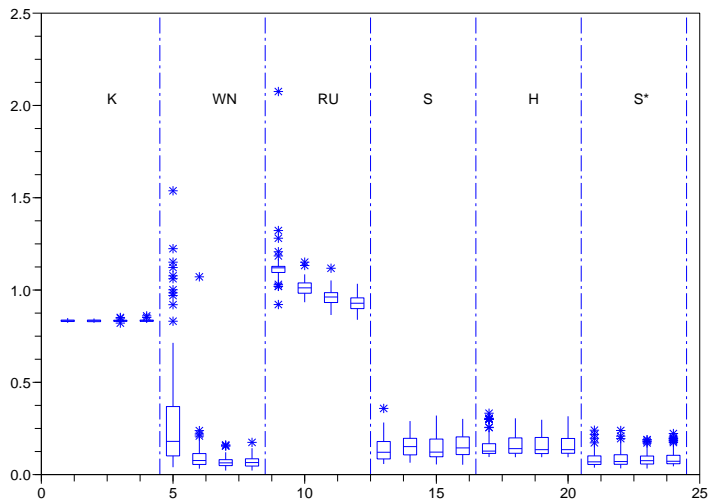}
\end{center}
\caption{Boxplot of the ISE for $h_k$ over 100 simulations for the 6 methods and the 4 different values of $k$. A column, delimited by dashed lines, corresponds to one method (respectively {\bf K, WN, RU, S, H, S*}). Inside this column, from left to right, one can find for the same method  the boxplot of the ISE for respectively $k=2, 4, 8$ and $16.$ }\label{boxherisson}
\end{figure}
\subsection{On  real data}
To illustrate and evaluate our procedure on real data, we consider two real data sets named, respectively in our study, ``Old Faithful geyser'' and ``Suicide''. The ``Old Faithful geyser'' data are the duration, in minutes, of $107$ eruptions of Old Faithful geyser located in Yellowstone National Park, USA; they are taken from \citet{Weisberg}. The ``Suicide'' data set is related to the study of suicide risks. Indeed, each of the $86$ observations corresponds to the number of days a patient, considered as control in the study, undergoes psychiatric treatment.  The data are available in \citet{copas}.
In both cases, we consider that we have a sample of $n$ real observations $X_1, \ldots, X_n$ and we want to estimate the underlying density $f$. We mention that in the first situation, all the observations are continuous whereas, in the second one, the observations are discrete. These data are well known and have been widely studied elsewhere. This allows to compare our procedure with other methods.

\begin{figure}[h!]
\begin{center}
\hspace{-0.5cm} \includegraphics[width=0.5\linewidth,angle=-0]{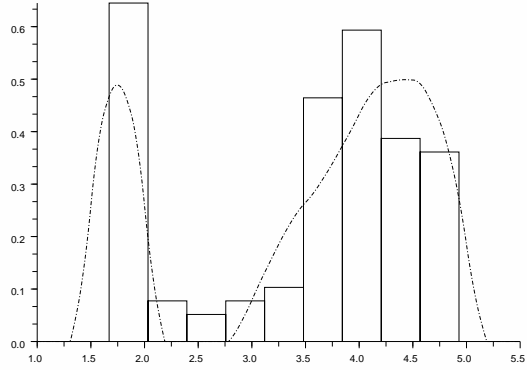}
\end{center}
\caption{Histogram (solid line) and reconstruction via $\tilde{f}_{n}^{Prac}$ (dashed line) for the "Old Faithful geyser" data set}\label{geyser}
\end{figure}

\noindent
To estimate the function $f$, we apply  $\tilde{f}_n^{Prac}$, with the Spline basis (see Figure \ref{Spline} in Appendix A) and $j_0=7$.
We plot, on the same graph
the resulting estimate and the histogram of the data. Figures \ref{geyser} and \ref{suicide} represent, respectively, the results for the ``Old Faithful geyser'' set and for the ``Suicide'' one.
Note that concerning the "Suicide" data set, there exists a problem of "scale": if we look at the associated histogram, the scale of the data seems to be approximately equal to 250, and not 1. So we divide the data by 250 before proceeding to the estimation.

\noindent
Respectively two or three peaks are detected providing multimodal reconstructions.  So, in comparison with the ones performed in \citet{silver} and \citet{SainScott}, our estimate  detects significant events and not artefacts. More interestingly, both estimates equal zero on an interval located between the last two peaks.
This cannot occur with the Gaussian kernel estimate mentioned previously. Of course, this has a strong impact for practical purposes, so this point is crucial. This tends to show that the proposed procedure is relevant for real data, even for relatively small sample~size.

\begin{figure}[h!]
\begin{center}
\hspace{-0.5cm} \includegraphics[width=0.5\linewidth,angle=-0]{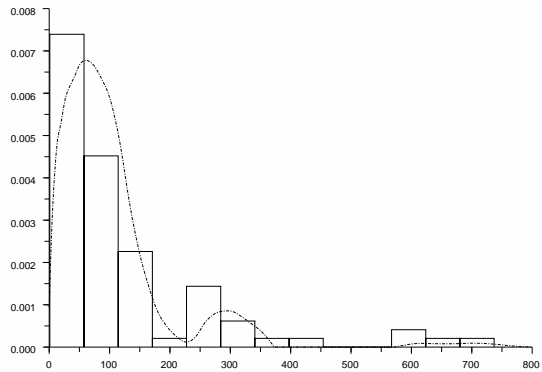}
\end{center}
\caption{Histogram (solid line) and reconstruction via $\tilde{f}_{n}^{Prac}$ (dashed line) for the "Suicide" data set}\label{suicide}
\end{figure}

\appendix
\section{Analytical tools}
All along this paper,  we have considered a particular
class of wavelet bases that are described now.
We set
$$\phi=\indic_{[0,1]}.$$ For any $r> 0$, we can claim that there exist three functions
$\psi$, $\tilde \phi$ and $\tilde\psi$ with the following properties:
\begin{enumerate}
\item $\tilde\phi$ and $\tilde\psi$ are compactly supported,
\item  $\tilde\phi$  and $\tilde\psi$  belong  to  $C^{r+1}$, where  $C^{r+1}$
denotes the H\"older space of order $r+1$,
\item $ \psi$ is compactly supported and is a piecewise constant function,
\item $\psi$ is orthogonal to polynomials of degree no larger than $r$,
\item $\{(\phi_{k},\psi_{jk})_{j\geq
0,k\in\Z},(\tilde\phi_{k},\tilde\psi_{jk})_{j\geq     0,k\in\Z}\}$    is    a
biorthogonal family: for any $j,j'\geq 0,$ for any $k,k',$
$$\int_\R\psi_{jk}(x)\tilde\phi_{k'}(x)dx=\int_\R\phi_{k}(x)\tilde\psi_{j'k'}(x)dx=0,$$
$$\int_\R\phi_{k}(x)\tilde\phi_{k'}(x)dx=1_{k=k'},\quad
\int_\R\psi_{jk}(x)\tilde\psi_{j'k'}(x)dx=1_{j=j',k=k'},$$
where for any $x\in\R$,
$$\phi_{k}(x)=\phi(x-k), \quad \psi_{jk}(x)=2^{\frac{j}{2}}\psi(2^jx-k)$$
and
$$\tilde\phi_{k}(x)=\tilde\phi(x-k), \quad \tilde\psi_{jk}(x)=2^{\frac{j}{2}}\tilde\psi(2^jx-k).$$
\end{enumerate}
This implies the following wavelet decomposition of $f\in \L_2(\R)$:
$$
f=\sum_{k\in\Z}\alpha_k\tilde\phi_k+\sum_{j\geq
0}\sum_{k\in\Z}\beta_{jk}\tilde\psi_{jk},
$$
where for any  $j\geq 0$ and any $k\in\Z$,
$$\alpha_k=\int_\R          f(x)\phi_k(x)dx,\quad          \beta_{jk}=\int_\R
f(x)\psi_{jk}(x)dx.$$
Such biorthogonal wavelet bases have been built by \citet{cdf} as a special
case  of spline  systems (see  also the  elegant  equivalent construction  of \citet{don}  from boxcar functions).
The  Haar basis can be  viewed as a
particular biorthogonal wavelet basis, by setting  $\tilde\phi=\phi$ and
$\tilde\psi=\psi=\indic_{[0,\frac{1}{2})}-\indic_{[\frac{1}{2},1]}$, with $r=0$ (even if Property 2
is not satisfied with such a choice). The Haar basis is
an orthonormal basis, which is not true for general biorthogonal wavelet
bases. However, we have the frame property: if we denote
$$\varPhi=\{\phi,\psi,\tilde\phi,\tilde\psi\}$$
there exist two constants $c_1(\varPhi)$ and $c_2(\varPhi)$ only depending on $\varPhi$ such that
$$c_1(\varPhi)\left(\sum_{k\in\Z}\alpha_k^2+\sum_{j\geq
0}\sum_{k\in\Z}\beta_{jk}^2\right)\leq                         \|f\|_{2}^2\leq
c_2(\varPhi)\left(\sum_{k\in\Z}\alpha_k^2+\sum_{j\geq
0}\sum_{k\in\Z}\beta_{jk}^2\right).$$
For    instance,    when    the    Haar    basis    is    considered,
$c_1(\varPhi)=c_2(\varPhi)=1$.

\begin{figure}[tpb]
\begin{center}
\hspace{-0.5cm} \includegraphics[width=0.5\linewidth,angle=-0]{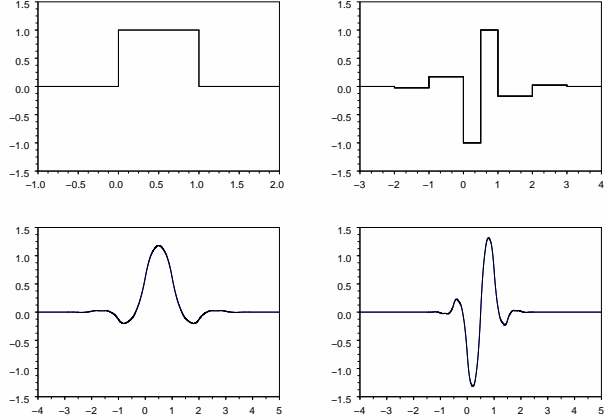}
\end{center}
\caption{Biorthogonal wavelet basis with $r=0.272$ that  is used in the Simulation study. First line,  $\phi$ (left) and $\psi$ (right), second line $\tilde\phi$ (left) and $\tilde\psi$ (right).  }\label{Spline}
\end{figure}

We emphasize the important feature of such bases: the functions $\psi_{jk}$ are piecewise constant functions. For instance, Figure \ref{Spline} shows an example which is the one that has been implemented for numerical studies. This allows to compute easily wavelet coefficients without using the discrete wavelet transform. In addition, there exists a constant $\mu_{\psi}>0$ such that
$$
\inf_{x\in[0,1]}|\phi(x)|\geq 1,\quad\inf_{x\in \Supp(\psi)}|\psi(x)|\geq\mu_{\psi},
$$
where $\Supp(\psi)=\{x\in\R:\quad \psi(x)\not=0\}.$

This technical feature will be used through the proofs of our results.
To shorten mathematical
expressions, we have previously set for any $k\in\Z$, $\tilde\p_{-1k}=\tilde\phi_k$, $\p_{-1k}=\phi_k$ and $\be_{-1k}=\al_k$.

Now, let us give some properties of Besov
spaces.  Besov spaces, denoted ${\cal B}^{\al}_{p,q}$, are classically
defined by using modulus of continuity (see \citet{dl} and \citet{hardle}).  We just  recall here the sequential characterization of Besov
spaces by using the biorthogonal wavelet basis (for further details, see \citet{dj}).

Let $1\leq  p,q\leq \infty$ and $0<\al<r+1$, the  ${\cal B}^\al_{p,q}$-norm of
$f$ is equivalent to the norm
$$
\norm{f}_{\al,p,q}=\left\{
\begin{array}{ll}
\norm{(\al_{k})_k}_{\ell_p}+\left[\sum_{j\geq 0}2^{jq(\al+\frac{1}{2}-\frac{1}{p})}\norm{(\be_{j,k})_{k}}_{\ell_p}^{q}\right]^{1/q}&\mbox{ if } q<\infty,\\
\norm{(\al_{k})_k}_{\ell_p}+\sup_{j\geq 0}2^{j(\al+\frac{1}{2}-\frac{1}{p})}\norm{(\be_{j,k})_{k}}_{\ell_p}&\mbox{ if } q=\infty.
\end{array}
\right.
$$
We  use this norm to define Besov balls with radius $R$
$${\cal B}^{\al}_{p,q}(R)=\{f \in \L_2(\R):\quad\norm{f}_{\al,p,q}\leq R\}.$$
  For any $R>0$, if
$0<\al'\leq\al<r+1$, $1\leq  p\leq p'\leq\infty$ and  $1\leq q\leq q'\leq\infty$,
we obviously have
$$
\mathcal{B}^\al_{p,q}(R)\subset\mathcal{B}^\al_{p,q'}(R),\quad
\mathcal{B}^\al_{p,q}(R)\subset\mathcal{B}^{\al'}_{p,q}(R).$$
Moreover
$$\mathcal{B}^{\al}_{p,q}(R)\subset\mathcal{B}^{\al'}_{p',q}(R)
\mbox{ if } \al-\frac{1}{p}\geq \al'-\frac{1}{p'}.$$
The class of Besov  spaces  provides a useful tool
to classify  wavelet decomposed  signals with respect to their  regularity and
sparsity properties (see \citet{joh}). Roughly speaking, regularity increases when $\al$ increases whereas
sparsity increases when $p$ decreases.

\section{Proofs}

\subsection{Proof of Theorem \ref{inegoraclelavraie}}
Because of the frame property of the biorthogonal wavelet basis, it is easy to see that
\begin{equation}\label{equnormes}
c_1(\varPhi)\norm{\tilde\be-\be}_{\ell_2}^2\leq \|\tilde f_{n,\ga}-f\|_{2}^2\leq
c_2(\varPhi)\norm{\tilde\be-\be}_{\ell_2}^2,
\end{equation}
where $\tilde\be$ denotes the sequence of thresholded coefficients $(\tb_\la \indic_{(j,k)\in\Gamma_n})_{(j,k)\in\Lambda}$ and $\be$ denotes the true coefficients $(\be_\la)_{(j,k)\in\Lambda}$.
Consequently, it is sufficient to restrict ourselves to the study of the $\norm{\tilde\be-\be}_{\ell_2}^2$.

Consequently the proof of Theorem \ref{inegoraclelavraie} relies on the following result (see Theorem 7 of Section 4.1 in \citet{Poisson_minimax}).

\begin{Th}\label{inegmodelsel} Let $\La$ be a set of indices. To estimate a countable family $\be=(\be_{\lambda})_{\lambda \in \La}$ such that $\|\be\|_{\ell_2}<\infty$, we assume that
 a family of coefficient estimators $(\hb_{\lambda})_{\lambda \in \Ga}$, where $\Ga$ is
a known deterministic subset of $\La$, and a family of possibly
random thresholds $(\eta_{\lambda})_{\lambda \in \Ga}$ are available and we consider
the thresholding rule $\tb=(\hb_{\lambda}\indic_{|\hb_{\lambda}|\geq
  \eta_{\lambda}}\indic_{\lambda \in \Ga})_{\lambda \in \La}$.
Let $\e>0$ be fixed.
Assume  that there exist  a deterministic  family $(F_{\lambda})_{\lambda \in\Ga}$ and
three constants $\kappa\in [0,1[$,  $\omega\in [0,1]$ and $\mu>0$ (that
may depend on $\e$ but not on $\lambda$) with the following properties.
\begin{itemize}
\item[(A1)] For   all  $\lambda \in\Ga$,  $$\P(|\hb_{\lambda}-\be_{\lambda}|>\kappa\eta_{\lambda})\leq\omega.$$
\item[(A2)] There exist $1<p,q<\infty$ with $\frac{1}{p}+\frac{1}{q}=1$ and a constant $R>0$ such that for all $\lambda \in\Ga$,
  $$\left(\E(|\hb_{\lambda }-\be_{\lambda}|^{2p})\right)^{\frac{1}{p}}\leq R \max(F_{\lambda},
  F_{\lambda}^{\frac{1}{p}}\e^{\frac{1}{q}}).$$
\item[(A3)] There exists  a constant $\theta$ such that for all $\lambda \in\Ga$ satisfying $F_{\lambda}<\theta \e$
$$\P(|\hb_{\lambda}-\be_{\lambda}|>\kappa\eta_{\lambda}\ , |\hb_{\lambda}|>\eta_{\lambda})\leq F_{\lambda}\mu.$$
\end{itemize}
Then the estimator $\tb$ satisfies
$$\frac{1-\kappa^2}{1+\kappa^2}\E\|\tb-\be\|_{\ell_2}^2\leq \E\inf_{m\subset
  \Ga}\left\{\frac{1+\kappa^2}{1-\kappa^2}\sum_{\lambda \not \in
  m}\be_{\lambda}^2+\frac{1-\kappa^2}{\kappa^2}\sum_{\lambda \in
  m}(\hb_{\lambda}-\be_{\lambda})^2+\sum_{{\lambda}\in m}\eta_{\lambda}^2\right\}+LD\sum_{\lambda \in\Ga}F_{\lambda}
$$
with $$LD=\frac{R}{\kappa^2}\left(\left(1+\theta^{-1/q}\right)\omega^{1/q}+(1+\theta^{1/q})\e^{1/q}\mu^{1/q}\right).$$
\end{Th}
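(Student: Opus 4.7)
My plan follows a conditioning argument based on the good event $A_\lambda = \{|\hb_\lambda - \be_\lambda| \leq \kappa \eta_\lambda\}$ and the keep event $T_\lambda = \{|\hb_\lambda| \geq \eta_\lambda\}$. For $\lambda \in \Gamma$ one has the identity $\tb_\lambda - \be_\lambda = (\hb_\lambda - \be_\lambda)\indic_{T_\lambda} - \be_\lambda \indic_{T_\lambda^c}$, while for $\lambda \notin \Gamma$ simply $\tb_\lambda - \be_\lambda = -\be_\lambda$. The strategy is to derive an essentially deterministic pointwise oracle bound on $A_\lambda$ (which, once summed over $\lambda$, gives the bracketed oracle term), and then to control the residual on $A_\lambda^c$ through hypotheses (A1)--(A3) via H\"older.

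\textbf{Deterministic step on the good event.} Fix any $m \subset \Gamma$. I would establish, for each $\lambda$, the pointwise inequality
\begin{equation*}
\frac{1-\kappa^2}{1+\kappa^2}(\tb_\lambda - \be_\lambda)^2\indic_{A_\lambda} \leq \frac{1+\kappa^2}{1-\kappa^2}\be_\lambda^2 \indic_{\lambda \notin m} + \left(\frac{1-\kappa^2}{\kappa^2}(\hb_\lambda - \be_\lambda)^2 + \eta_\lambda^2\right)\indic_{\lambda \in m}.
\end{equation*}
For $\lambda \in m$, I split on $T_\lambda$: on $T_\lambda$, $(\tb_\lambda - \be_\lambda)^2 = (\hb_\lambda - \be_\lambda)^2$; on $T_\lambda^c$, the Young-type identity $\be_\lambda^2 = (\hb_\lambda - (\hb_\lambda - \be_\lambda))^2 \leq \hb_\lambda^2/(1-\kappa^2) + (\hb_\lambda - \be_\lambda)^2/\kappa^2$ combined with $\hb_\lambda^2 < \eta_\lambda^2$ delivers the right-hand side. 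For $\lambda \notin m$, $\lambda \in \Gamma$, on $T_\lambda \cap A_\lambda$ one has $|\be_\lambda| \geq (1-\kappa)\eta_\lambda$, so $(\hb_\lambda - \be_\lambda)^2 \leq \kappa^2\eta_\lambda^2 \leq \kappa^2\be_\lambda^2/(1-\kappa)^2$; then the elementary identity $\kappa^2(1+\kappa)^2 \leq (1+\kappa^2)^2$ upgrades the factor $\kappa^2/(1-\kappa)^2$ to $(1+\kappa^2)^2/(1-\kappa^2)^2$, which is exactly what is needed once the prefactor $(1-\kappa^2)/(1+\kappa^2)$ is absorbed. The case $\lambda \notin \Gamma$ is trivial. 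Summing over $\lambda$ reconstructs the bracketed oracle quantity.

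\textbf{Stochastic step on the bad event.} For $\lambda \in \Gamma$, write $(\tb_\lambda - \be_\lambda)^2\indic_{A_\lambda^c} = (\hb_\lambda - \be_\lambda)^2 \indic_{T_\lambda \cap A_\lambda^c} + \be_\lambda^2 \indic_{T_\lambda^c \cap A_\lambda^c}$. On $T_\lambda^c \cap A_\lambda^c$ one has $|\hb_\lambda| < \eta_\lambda < |\hb_\lambda - \be_\lambda|/\kappa$, hence $\be_\lambda^2 \leq C(\kappa)(\hb_\lambda - \be_\lambda)^2$ with $C(\kappa)$ of order $\kappa^{-2}$; this is the source of the $1/\kappa^2$ prefactor in $LD$. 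Both resulting terms then have the form $\E[(\hb_\lambda - \be_\lambda)^2 \indic_B]$, which H\"older's inequality with exponents $p,q$ bounds by $(\E[(\hb_\lambda - \be_\lambda)^{2p}])^{1/p}\,\P(B)^{1/q}$. Hypothesis (A2) combined with $\max(a,b)\leq a+b$ controls the first factor by $R(F_\lambda + F_\lambda^{1/p}\e^{1/q})$. For $\P(B)^{1/q}$ I distinguish two regimes: when $F_\lambda \geq \theta\e$, (A1) gives $\P(A_\lambda^c)^{1/q} \leq \omega^{1/q}$; when $F_\lambda < \theta\e$, (A3) (applicable on $T_\lambda \cap A_\lambda^c$) gives $\P(B)^{1/q} \leq F_\lambda^{1/q}\mu^{1/q}$. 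A short calculation merges the regimes into
\begin{equation*}
\E[(\hb_\lambda - \be_\lambda)^2 \indic_B] \leq R F_\lambda \big[(1+\theta^{-1/q})\omega^{1/q} + (1+\theta^{1/q})\e^{1/q}\mu^{1/q}\big],
\end{equation*}
and summation over $\lambda \in \Gamma$ produces the $LD \sum_{\lambda \in \Gamma} F_\lambda$ term.

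\textbf{Main obstacle.} The delicate point is the calibration of constants in the deterministic step so that the pointwise inequality holds with exactly the prescribed coefficients $(1-\kappa^2)/(1+\kappa^2)$, $(1+\kappa^2)/(1-\kappa^2)$, $(1-\kappa^2)/\kappa^2$. The sub-case $\lambda \notin m$ with threshold keeping is the tightest: the naive bound $\kappa^2/(1-\kappa)^2$ on $(\hb_\lambda - \be_\lambda)^2/\be_\lambda^2$ is not directly compatible with the stated constants, and it is the algebraic identity $\kappa^2(1+\kappa)^2 \leq (1+\kappa^2)^2$ that reconciles the two. Once the pointwise bound is in hand for every sample and every $m \subset \Gamma$, taking expectation and then passing $\inf_m$ inside the expectation is legitimate and yields the claim.
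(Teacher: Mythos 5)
Your deterministic step is correct and is essentially the right architecture (the paper itself does not reprove this statement; it invokes Theorem 7 of Reynaud-Bouret and Rivoirard (2008), whose proof has the same skeleton: a pointwise oracle bound plus a H\"older/large-deviation remainder). The verification $\kappa^2(1+\kappa)^2\leq(1+\kappa^2)^2$ for the kept case with $\lambda\notin m$, and the Young-type splitting for the killed case with $\lambda\in m$, both check out. The genuine gap is in your stochastic step. By putting the indicator of the good event $A_\lambda=\{|\hb_{\lambda}-\be_{\lambda}|\leq\kappa\eta_{\lambda}\}$ on the whole of $(\tb_{\lambda}-\be_{\lambda})^2$, you must then control the entire bad-event contribution, including the killed-and-bad piece $\be_{\lambda}^2\indic_{T_\lambda^c\cap A_\lambda^c}$, where $T_\lambda=\{|\hb_{\lambda}|\geq\eta_{\lambda}\}$. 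Assumption (A3) only concerns the event $\{|\hb_{\lambda}|>\eta_{\lambda}\}$, so it is unavailable for this piece; in the regime $F_{\lambda}<\theta\e$ you can only invoke (A1), and H\"older plus (A2) then yields a bound proportional to $\max\left(F_{\lambda},F_{\lambda}^{1/p}\e^{1/q}\right)\omega^{1/q}$, where $F_{\lambda}^{1/p}\e^{1/q}=F_{\lambda}\left(\e/F_{\lambda}\right)^{1/q}$ is \emph{not} $O(F_{\lambda})$ as $F_{\lambda}\to0$. So your ``short calculation merging the regimes'' cannot produce a bound of the form $LD\,F_{\lambda}$ for this term, and the announced inequality is not reached. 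A secondary defect is the constant: the conversion $\be_{\lambda}^2\leq(1+\kappa)^2\kappa^{-2}(\hb_{\lambda}-\be_{\lambda})^2$ gives, after multiplication by $\frac{1-\kappa^2}{1+\kappa^2}$, the factor $\frac{(1-\kappa)(1+\kappa)^3}{(1+\kappa^2)\kappa^2}$, which exceeds $\kappa^{-2}$ for moderate $\kappa$ (e.g.\ $\kappa=1/2$), so even formally you would not recover the stated $LD=\frac{R}{\kappa^2}\left(\left(1+\theta^{-1/q}\right)\omega^{1/q}+(1+\theta^{1/q})\e^{1/q}\mu^{1/q}\right)$.

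The repair is to observe that killed coefficients never need the good event: for $\lambda\in m$ your own bound $\be_{\lambda}^2\leq\frac{\hb_{\lambda}^2}{1-\kappa^2}+\frac{(\hb_{\lambda}-\be_{\lambda})^2}{\kappa^2}$ combined with $|\hb_{\lambda}|<\eta_{\lambda}$ holds surely, and for $\lambda\notin m$ the trivial bound $\frac{1-\kappa^2}{1+\kappa^2}\be_{\lambda}^2\leq\frac{1+\kappa^2}{1-\kappa^2}\be_{\lambda}^2$ holds surely; only the case ``$\lambda\notin m$, kept, bad'' needs to be excised. Hence the sole stochastic remainder is $\frac{1-\kappa^2}{1+\kappa^2}\sum_{\lambda\in\Gamma}\E\left[(\hb_{\lambda}-\be_{\lambda})^2\indic_{T_\lambda\cap A_\lambda^c}\right]$, exactly the term your two-regime argument ((A3) with (A2) when $F_{\lambda}<\theta\e$, (A1) with (A2) otherwise) does handle correctly; since $\frac{1-\kappa^2}{1+\kappa^2}\leq1\leq\kappa^{-2}$, this yields the stated $LD$ with room to spare. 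With that modification your proof goes through and coincides with the argument of the cited reference.
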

To prove Theorem \ref{inegoraclelavraie},  we  use Theorem \ref{inegmodelsel} with $\lambda=(j,k)$,  $\hb_\lambda=\hb_\la$  defined in  (\ref{defest1}),   $\eta_\la=\eta_{\la,\ga}$  defined   in
(\ref{defthresh})  and
$$\Ga=\Ga_n=\left\{(j,k)\in\Lambda:\  -1\leq  j\leq  j_0\right\}\mbox{
with } \
2^{j_0}\leq n^c(\ln n)^{c'}<2^{j_0+1} .$$
We set
$$F_\la=\int_{\Supp(\p_\la)}f(x)dx.$$
Hence we have:
\begin{equation}\label{Fla}
\sum_{(j,k)\in\Ga_n}F_\la=\sum_{-1\leq j\leq j_0}\sum_k\int_{x\in
\Supp(\p_{jk})} f(x)dx\leq\int f(x)dx\sum_{-1\leq j\leq
j_0}\sum_k\indic_{x\in \Supp(\p_{jk})} \leq (j_0+2)m_\p,
\end{equation}
where $m_\p$  is a finite constant depending  only on the  compactly  supported function
$\psi$.   Finally, $\sum_{(j,k)\in\Ga_n}F_\la$  is
bounded by $\log(n)$ up to a  constant that only depends on $c$,
$c'$ and the function $\psi$. Now, we give a fundamental lemma to
derive Assumption (A1) of  Theorem~\ref{inegmodelsel}.
\begin{Lemma}
For any $\gamma>1$ and any $\e'>0$ there exists a constant $M$ depending on $\e$ and $\gamma$ such that
$$\P\left(\sigma_{jk}^2\geq (1+\e')\widetilde{\si}^2_{jk}\right)\leq M n^{-\gamma}.$$
\end{Lemma}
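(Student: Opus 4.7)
The plan is to derive the lemma from a Bernstein-type concentration inequality applied to $\widehat\sigma_{jk}^{2}$, and then invert the resulting implicit bound to express everything in empirical terms.

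First, I would view
$$\widehat\sigma_{jk}^{2}=\frac{1}{n(n-1)}\sum_{i>l}\bigl(\psi_{jk}(X_i)-\psi_{jk}(X_l)\bigr)^{2}$$
as a $U$-statistic of order $2$ with kernel bounded by $2\|\psi_{jk}\|_\infty^{2}$ and whose Hoeffding linear part has variance of order $\sigma_{jk}^{2}\|\psi_{jk}\|_\infty^{2}$. A Bernstein-type inequality for $U$-statistics (or, equivalently, a Bernstein bound on the linear part of its Hoeffding decomposition plus a Hanson--Wright-type bound on the degenerate remainder) yields absolute constants $M_0,C_1>0$ such that, on an event $\Omega_n$ with $\P(\Omega_n^c)\le M_0 n^{-\gamma}$,
$$\sigma_{jk}^{2}\;\le\;\widehat\sigma_{jk}^{2}+\|\psi_{jk}\|_\infty\sqrt{2\gamma\,\sigma_{jk}^{2}\,\tfrac{\ln n}{n}}\,\bigl(1+o_n(1)\bigr)+C_1\gamma\|\psi_{jk}\|_\infty^{2}\tfrac{\ln n}{n}.$$

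Second, I would invert this implicit inequality. Writing it as a quadratic $u^{2}\le B+Au$ in $u=\sqrt{\sigma_{jk}^{2}}$ with $A=\|\psi_{jk}\|_\infty\sqrt{2\gamma \ln n/n}(1+o_n(1))$ and $B=\widehat\sigma_{jk}^{2}+C_1\gamma\|\psi_{jk}\|_\infty^{2}\ln n/n$ gives $u\le A+\sqrt{B}$. Squaring and using $\sqrt{B}\le\sqrt{\widehat\sigma_{jk}^{2}}+\sqrt{C_1\gamma\|\psi_{jk}\|_\infty^{2}\ln n/n}$ shows that on $\Omega_n$,
$$\sigma_{jk}^{2}\;\le\;\widehat\sigma_{jk}^{2}+2\|\psi_{jk}\|_\infty\sqrt{2\gamma\,\widehat\sigma_{jk}^{2}\,\tfrac{\ln n}{n}}\,\bigl(1+o_n(1)\bigr)+C_2\gamma\|\psi_{jk}\|_\infty^{2}\tfrac{\ln n}{n},$$
for a constant $C_2$ depending only on $C_1$. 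Comparing this with
$$(1+\e')\widetilde\sigma_{jk}^{2}=(1+\e')\widehat\sigma_{jk}^{2}+2(1+\e')\|\psi_{jk}\|_\infty\sqrt{2\gamma\widehat\sigma_{jk}^{2}\tfrac{\ln n}{n}}+8(1+\e')\gamma\|\psi_{jk}\|_\infty^{2}\tfrac{\ln n}{n},$$
each of the three coefficients on the right of the previous display is majorized by the corresponding coefficient of $(1+\e')\widetilde\sigma_{jk}^{2}$: the first two thanks to the slack $\e'>0$, and the last one because the constant $8$ built into $\widetilde\sigma_{jk}^{2}$ is chosen precisely so that it absorbs $C_2$ for $n$ large enough. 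This establishes $\sigma_{jk}^{2}\le(1+\e')\widetilde\sigma_{jk}^{2}$ on $\Omega_n$, hence the lemma with $M=M_0$ (after possibly enlarging $M$ to handle finitely many small~$n$).

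The main obstacle is step one: obtaining the Bernstein-type inequality with the sharp constants that make the inversion step close. A blunt application of Bernstein to $\tfrac{1}{n}\sum\psi_{jk}^{2}(X_i)$, using $\sigma_{jk}^{2}\le\E[\psi_{jk}^{2}]$ to bound the variance, would spoil the factor $\sqrt{\sigma_{jk}^{2}}$ in the deviation and force one to separately control $\hat\beta_{jk}^{2}$, producing larger constants than the definition of $\widetilde\sigma_{jk}^{2}$ can accommodate. Treating $\widehat\sigma_{jk}^{2}$ as a genuine $U$-statistic (via the Hoeffding decomposition) is therefore essential, and this is precisely the place where the calibration of the numerical constants $2$ and $8$ in $\widetilde\sigma_{jk}^{2}$ comes into play.
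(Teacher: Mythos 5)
Your overall route is the same as the paper's: write $\widehat{\si}^2_{jk}$ through its Hoeffding decomposition (an average of $(\p_{jk}(X_i)-\be_{jk})^2$ minus a degenerate U-statistic of order two), apply Bernstein to the first part and an exponential inequality for degenerate U-statistics (the paper uses Houdr\'e--Reynaud-Bouret with Klein--Rio constants) to the second, invert the resulting quadratic in $\si_{jk}$, and compare with $(1+\e')\widetilde{\si}^2_{jk}$. However, the constants you announce do not close the argument for an arbitrary (small) $\e'$, and the lemma is exactly a statement about constants. First, Bernstein does not give the coefficient $1$ you claim on the term $\norm{\p_{jk}}_\infty\sqrt{2\gamma\si^2_{jk}\ln n/n}$: the variance of $(\p_{jk}(X_i)-\be_{jk})^2$ is only bounded by $\si^2_{jk}(\norm{\p_{jk}}_\infty+|\be_{jk}|)^2\le 4\si^2_{jk}\norm{\p_{jk}}^2_\infty$, and $|\be_{jk}|$ need not be negligible compared with $\norm{\p_{jk}}_\infty$ at coarse levels, so the correct general coefficient is $2$ (this is precisely why the definition of $\widetilde{\si}^2_{jk}$ carries a $2$). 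Second, with that correct coefficient the lossy inversion ``$u^2\le B+Au\Rightarrow u\le A+\sqrt B$, then square'' produces $2A\sqrt{\widehat{\si}^2_{jk}}$, i.e.\ a coefficient $4$ on the cross term instead of the required $2$, and lumping all $\si^2_{jk}$-proportional corrections (the Bernstein term $\si^2_{jk}u/(3n)$ and the leading U-statistic terms, which are proportional to $\si^2_{jk}$) into an additive $C_1\gamma\norm{\p_{jk}}^2_\infty\ln n/n$ via $\si^2_{jk}\le\norm{\p_{jk}}^2_\infty$ inflates your final additive constant $C_2$ well beyond $8$. The term-by-term domination by $(1+\e')\widetilde{\si}^2_{jk}$ then fails for small $\e'$ exactly in the regime where $\widehat{\si}^2_{jk}$ is small (possibly zero), because there the first two terms provide no slack; so the claim that ``the constant $8$ absorbs $C_2$'' is not justified by the bounds you describe.

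The paper's proof shows how to fix precisely these points: it keeps the $\si^2_{jk}$-proportional error terms as a multiplicative correction $\theta_1=1-a\ln n/n$ (only genuinely lower-order terms, of order $(\ln n/n)^{3/2}$, are kept additively), and it inverts the quadratic $\theta_1\si^2-2\theta_2\si-\theta_3$ exactly, via $\si\le\theta_1^{-1}(\theta_2+\sqrt{\theta_2^2+\theta_1\theta_3})$, with $\theta_2=\norm{\p_{jk}}_\infty\sqrt{2\gamma\ln n/n}$ and $\theta_3=\widehat{\si}^2_{jk}+b\norm{\p_{jk}}^2_\infty(\ln n/n)^{3/2}$. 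This yields $\si^2_{jk}\le\theta_3/\theta_1+2\theta_2\sqrt{\theta_3}/\theta_1^{3/2}+4\theta_2^2/\theta_1^2$, whose three terms are exactly $\widehat{\si}^2_{jk}$, $2\norm{\p_{jk}}_\infty\sqrt{2\gamma\widehat{\si}^2_{jk}\ln n/n}$ and $8\gamma\norm{\p_{jk}}^2_\infty\ln n/n$ up to $(1+o_n(1))$ factors, which the slack $(1+\e')$ absorbs for $n$ large (small $n$ being handled by enlarging $M$). In short, your strategy is the right one and identical in structure to the paper's, but the sharp bookkeeping you yourself flag as the main obstacle is not supplied, and as stated the final absorption step would fail for small $\e'$.
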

\begin{proof}
We have:
\begin{eqnarray}\label{sn}
\widehat{\si}^2_{jk}&=& \frac{1}{2n(n-1)}\sum_{i\not =l}(\p_{jk}(X_i)-\p_{jk}(X_l))^2\nonumber\\
&=&\frac{1}{2n}\sum_{i=1}^n(\p_{jk}(X_i)-\be_{jk})^2+\frac{1}{2n}\sum_{l=1}^n(\p_{jk}(X_l)-\be_{jk})^2\nonumber\\
&&\hspace{1cm}-\frac{2}{n(n-1)}\sum_{i=2}^n\sum_{l=1}^{i-1}(\p_{jk}(X_i)-\be_{jk})(\p_{jk}(X_l)-\be_{jk})\nonumber\\
&=& s_n-\frac{2}{n(n-1)} u_n
\end{eqnarray}
with
$$s_n=\frac{1}{n}\sum_{i=1}^n(\p_{jk}(X_i)-\be_{jk})^2 \quad\mbox{ and }\quad u_n= \sum_{i=2}^n\sum_{l=1}^{i-1}(\p_{jk}(X_i)-\be_{jk})(\p_{jk}(X_l)-\be_{jk}).$$
Using the Bernstein inequality (see section 2.2.3 in \citet{stflourpascal})  applied to the variables $Y_i$ with $$Y_i=\frac{\si_{jk}^2-(\p_{jk}(X_i)-\be_{jk})^2}{n}\leq \frac{\si_{jk}^2}{n},$$ one obtains for any $u>0$,
$$\P\left(\si^2_{jk}\geq s_n +\sqrt{2 v_{jk} u}+\frac{\si^2_{jk} u}{3n}\right)\leq e^{-u}$$
with
$$v_{jk}=\frac{1}{n}\E\left[\left(\si_{jk}^2-(\p_{jk}(X_i)-\be_{jk})^2\right)^2\right].$$
We have
\begin{eqnarray*}
v_{jk}&=& \frac{1}{n}\left(\si_{jk}^4+\E\left[(\p_{jk}(X_i)-\beta_{jk})^4\right]-2\si_{jk}^2\E\left[(\p_{jk}(X_i)-\beta_{jk})^2\right]\right)\\
&=&\frac{1}{n}\left(\E\left[(\p_{jk}(X_i)-\beta_{jk})^4\right]-\si_{jk}^4\right)\\
&\leq&\frac{\si_{jk}^2}{n}\left(\norm{\p_{jk}}_{\infty}+|\beta_{jk}|\right)^2\\
&\leq& \frac{4\si_{jk}^2}{n}\norm{\p_{jk}}_\infty^2 .
\end{eqnarray*}
Finally
\begin{equation}
\label{concsn}
\P\left(\si^2_{jk}\geq s_n +2\norm{\p_{jk}}_\infty\si_{jk}\sqrt{\frac{2u}{n}}+\frac{\si^2_{jk} u}{3n}\right)\leq e^{-u}.
\end{equation}

\noindent
Now, we deal with the degenerate U-statistics $u_n$. We use Theorem 3.1 of \citet{hr} combined with the appropriate choice of constants derived by \citet{kr}: for any $u>0$ and any $\tau>0$,
\begin{equation}
\label{concustats}
\P\left(u_n\geq (1+\tau)C\sqrt{2u}+2Du+\frac{1+\tau}{3} F u +\left(\sqrt{2}(3+\tau^{-1})+\frac{2}{3}\right) B u^{3/2}+ \frac{3+\tau^{-1}}{3}Au^2\right)\leq 3 e^{-u}.
\end{equation}
Now we need to define and control the 5 quantities $A,B,C,D$ and $F$. For this purpose, let us set for any $x$ and $y$,
 $$g_{jk}(x,y)=(\p_{jk}(x)-\be_{jk})(\p_{jk}(y)-\be_{jk}).$$
We have:
$$A=\norm{g_{jk}}_\infty \leq 4 \norm{\p_{jk}}_\infty^2.$$
Furthermore,
$$C^2=\sum_{i=2}^n\sum_{l=1}^{i-1} \E(g^2_{jk}(X_i,X_l))=\frac{n(n-1)}{2}\si^4_{jk}.$$
The next term is
\begin{eqnarray*}
D&=&\sup_{\E\sum a_i^2(X_i)\leq 1, \ \E\sum b_l^2(X_l)\leq 1}\E\left(\sum_{i=2}^n\sum_{l=1}^{i-1} g_{jk}(X_i,X_l) a_i(X_i)b_l(X_l)\right)\\
&=&\sup_{\E\sum a_i^2(X_i)\leq 1,\ \E\sum b_l^2(X_l)\leq 1}\sum_{i=2}^n\sum_{l=1}^{i-1}\E\left((\p_{jk}(X_i)-\be_{jk})a_i(X_i)\right)\E\left((\p_{jk}(X_l)-\be_{jk})b_l(X_l)\right)\\
&\leq & \sup_{\E\sum a_i^2(X_i)\leq 1,\ \E\sum b_l^2(X_l)\leq 1}\sum_{i=2}^n\sum_{l=1}^{i-1} \sqrt{\si^2_{jk}\E( a_i^2(X_i))}\sqrt{\si^2_{jk} \E (b_l^2(X_l))}.
\end{eqnarray*}
So, we have
\begin{eqnarray*}
D
&\leq & \si^2_{jk}  \sup_{\E\sum a_i^2(X_i)\leq 1,\ \E\sum b_l^2(X_l)\leq 1} \sum_{i=2}^n \sqrt{\E(a_i^2(X_i))} \sqrt{\sum_{l=1}^{i-1}\E(b_l^2(X_l))}\sqrt{i-1}\\
&\leq &\si^2_{jk}  \sup_{\E\sum a_i^2(X_i)\leq 1} \sqrt{\sum_{i=2}^n \E(a_i^2(X_i))} \sqrt{\sum_{i=2}^n{(i-1)}}\\
&\leq & \si^2_{jk} \sqrt{\frac{n(n-1)}{2}}.
\end{eqnarray*}
Still using Theorem 3.1 of \citet{hr}, we have:
\begin{eqnarray*}
B^2&=&\sup_{t} \sum_{l=1}^{n-1} \E((\p_{jk}(t)-\be_{jk})^2(\p_{jk}(X_l)-\be_{jk})^2)\\
&\leq & 4 (n-1)\norm{\p_{jk}}_\infty^2 \si^2_{jk}\\
&\leq& 4 (n-1) \norm{\p_{jk}}_\infty^4
\end{eqnarray*}
Finally
\begin{eqnarray*}
F &=&\E\left(\sup_{i,t}\Bigg|\sum_{l=1}^{i-1}(\p_{jk}(t)-\be_{jk})(\p_{jk}(X_l)-\be_{jk})\Bigg|\right)\\
&\leq & 2\norm{\p_{jk}}_\infty \E\left(\sup_i \Bigg|\sum_{l=1}^{i-1}(\p_{jk}(X_l)-\be_{jk})\Bigg|\right).
\end{eqnarray*}
To control this term, we set
$$Z_i=\sum_{l=1}^{i-1} (\p_{jk}(X_l)-\be_{jk}).$$
Applying Lemma 1 of \citet{devlugo}, for any $s>0$, for any $i\leq n$,
$$\E\left(e^{sZ_i}\right)\leq e^{s^2 4 \norm{\p_{jk}}_\infty^2 (i-1)/8}\leq e^{s^2 (n-1)\norm{\p_{jk}}_\infty^2/2}.$$
Similarly,
$$\E\left(e^{-sZ_i}\right)\leq e^{s^2 (n-1)\norm{\p_{jk}}_\infty^2/2}.$$ Hence, by Lemma 2.2 of \citet{devlugo},
$$\E\left(\sup_i \Bigg|\sum_{l=1}^{i-1}(\p_{jk}(X_l)-\be_{jk})\Bigg|\right)\leq \norm{\p_{jk}}_\infty\sqrt{2(n-1)\ln(2n)}.$$
Hence
$$F\leq 2\sqrt{2}\norm{\p_{jk}}_\infty^2 \sqrt{(n-1)\ln(2n)}.$$
Now, for any $u>0$, let us set $$S(u)=2\norm{\p_{jk}}_\infty\si_{jk}\sqrt{2 \frac{u}{n}}+\frac{\si^2_{jk} u}{3n}$$
and $$U(u)=(1+\tau)C\sqrt{2u}+2Du+\frac{1+\tau}{3} F u +\left(\sqrt{2}(3+\tau^{-1})+\frac{2}{3}\right) B u^{3/2}+ \frac{3+\tau^{-1}}{3}Au^2.$$
Inequalities (\ref{concsn}) and (\ref{concustats}) give
\begin{eqnarray*}
\P\left(\si^2_{jk}\geq \widehat{\si}^2_{jk}+ S(u)+\frac{2}{n(n-1)}U(u)\right)&=&\P\left(\si^2_{jk}\geq s_n+S(u) + \frac{2}{n(n-1)}(U(u)-u_n)\right)\\
&\leq & \P\left(\si^2_{jk}\geq s_n+S(u)\right)+\P(u_n\geq U(u))\\
&\leq & 4 e^{-u}.
\end{eqnarray*}
Let us take $u=\gamma \ln n$ and $\tau=1$.
Then, there exist some constants $a$ and $b$ depending on $\gamma$ such that
$$S(u)+\frac{2}{n(n-1)}U(u)\leq 2\si_{jk}\norm{\p_{jk}}_\infty\sqrt{2\gamma\frac{\ln n}{n}}+a \si^2_{jk} \frac{\ln n}{n}+ b\norm{\p_{jk}}_\infty^2 \left(\frac{\ln n}{n}\right)^{3/2}.$$
So,
$$\P\left(\si^2_{jk}\geq \widehat{\si}^2_{jk}+2\si_{jk}\norm{\p_{jk}}_\infty\sqrt{2\gamma\frac{\ln n}{n}}+a \si^2_{jk} \frac{\ln n}{n}+ b\norm{\p_{jk}}_\infty^2 \left(\frac{\ln n}{n}\right)^{3/2}\right)\leq 4n^{-\gamma}$$
and
$$\P\left(\si^2_{jk}\left(1-a \frac{\ln n}{n}\right)- 2\si_{jk}\norm{\p_{jk}}_\infty\sqrt{2\gamma\frac{\ln n}{n}}-\widehat{\si}^2_{jk}- b\norm{\p_{jk}}_\infty^2 \left(\frac{\ln n}{n}\right)^{3/2}\geq 0\right)\leq 4n^{-\gamma}.$$
Now, we set
$$\theta_1=\left(1-a \frac{\ln n}{n}\right),\quad \theta_2=\norm{\p_{jk}}_\infty\sqrt{2\gamma\frac{\ln n}{n}}$$
and
$$\theta_3=\widehat{\si}^2_{jk}+ b\norm{\p_{jk}}_\infty^2 \left(\frac{\ln n}{n}\right)^{3/2}$$with $\theta_1,\theta_2,\theta_3 >0$ for $n$ large enough depending only on $\gamma$.
We study the polynomial
$$p(\si)=\theta_1 \si^2-2\theta_2\si-\theta_3.$$
Then, since $\si\geq 0$, $p(\si)\geq 0$ means that
$$\si \geq  \frac{1}{\theta_1}\left(\theta_2+\sqrt{\theta_2^2+\theta_1\theta_3}\right),$$
which is equivalent to
$$\si^2 \geq \frac{1}{\theta_1^2}\left(2\theta_2^2+\theta_1\theta_3+2\theta_2\sqrt{\theta_2^2+\theta_1\theta_3}\right).$$
Hence
$$\P\left(\si^2_{jk} \geq \frac{1}{\theta_1^2}\left(2\theta_2^2+\theta_1\theta_3+2\theta_2\sqrt{\theta_2^2+\theta_1\theta_3}\right)\right)\leq 4n^{-\gamma}.$$
So,
$$\P\left(\si^2_{jk} \geq \frac{\theta_3}{\theta_1}+\frac{2\theta_2\sqrt{\theta_3}}{\theta_1\sqrt{\theta_1}}+\frac{4\theta_2^2}{\theta_1^2}\right)\leq 4n^{-\gamma}.$$
 So, there exist  absolute constants $\delta$, $\eta,$ and $\tau'$  depending only on $\gamma$ so that for $n$ large enough,
\begin{small}
$$\P\left(\si^2_{jk} \geq \widehat{\si}^2_{jk}\left(1+\delta \frac{\ln n}{n}\right)+ \left(1+\eta\frac{\ln n}{n}\right)2\norm{\p_{jk}}_\infty\sqrt{2\gamma\widehat{\si}^2_{jk}\frac{\ln n}{n}}+8\gamma \norm{\p_{jk}}_\infty^2 \frac{\ln n}{n}\left(1+\tau'\left(\frac{\ln n}{n}\right)^{1/4}\right)\right)\leq 4 n^{-\gamma}.$$  \end{small}
Hence, with
$$\widetilde{\si}^2_{jk}=\widehat{\si}^2_{jk}+2\norm{\p_{jk}}_\infty\sqrt{2\gamma\widehat{\si}^2_{jk}\frac{\ln n}{n}}+8\gamma \norm{\p_{jk}}_\infty^2 \frac{\ln n}{n},$$
for all $\e>0$ there exists $M$ such that
$$\P(\sigma^2_{jk}\geq (1+\e')\widetilde{\si}^2_{jk})\leq M n^{-\gamma}.$$
\end{proof}
Let $\kappa<1$. Applying the previous lemma gives
\begin{eqnarray*}
 \P(|\hb_\la-\be_\la|>\kappa\eta_{\la,\gamma})
& \leq& \P\left(|\hb_\la-\be_\la|\geq \sqrt{2\kappa^2\ga
    {\widetilde{\si}}^2_{jk}\frac{\ln n}{n}}+\frac{2\kappa\gamma\ln n
    \norm{\p_\la}_\infty }{3 n}\right)\\
&\leq& \P\left(|\hb_\la-\be_\la|\geq\sqrt{2\kappa^2\ga{\widetilde{\si}}^2_{jk}\frac{\ln n}{n}}+\frac{2\kappa\gamma\ln n
    \norm{\p_\la}_\infty }{3 n} , \ \sigma^2_{jk}\geq (1+\e')\widetilde{\si}^2_{jk}\right)\\&&+\P\left(|\hb_\la-\be_\la|\geq\sqrt{2\kappa^2\ga{\widetilde{\si}}^2_{jk}\frac{\ln n}{n}}+\frac{2\kappa\gamma\ln n
    \norm{\p_\la}_\infty }{3 n}, \ \sigma^2_{jk}< (1+\e')\widetilde{\si}^2_{jk}\right)\\
&\leq &\P\left( \sigma^2_{jk}\geq (1+\e)\widetilde{\si}^2_{jk}\right)\\
&&\hspace{1cm}+ \P\left(|\hb_\la-\be_\la|\geq \sqrt{2\kappa^2\ga(1+\e')^{-1}\si^2_{jk}\frac{\ln n}{n}}+\frac{2\kappa\gamma\ln n \norm{\p_\la}_\infty }{3n}\right).
\end{eqnarray*}
Using again the Bernstein inequality, we have for any $u>0$,
$$\P\left(|\hb_\la-\be_\la|\geq \sqrt{\frac{2u\si^2_{jk}}{n}}+\frac{2u\norm{\p_\la}_\infty }{3n}\right)\leq 2e^{-u}.$$
So, with $\e'=1-\kappa$, there exists a constant $M_\kappa$ depending only on $\kappa$ and $\gamma$ such that
$$\P(|\hb_\la-\be_\la|>\kappa\eta_{\la,\gamma}) \leq M_\kappa n^{-\gamma\kappa^2/(2-\kappa)}.$$
So, for any value of $\kappa\in [0,1[$, Assumption (A1) is true with $\eta_{jk}=\eta_{jk,\gamma}$  if we take $\omega=M_\kappa n^{-\gamma\kappa^2/(2-\kappa)}$.

\noindent
Now, to prove (A2), we use the Rosenthal inequality. There exists a constant $C(p)$ only depending on $p$ such that
\begin{eqnarray*}
\E(|\hb_\la-\be_\la|^{2p})&=&\frac{1}{n^{2p}} \E\left[\left|\sum_{i=1}^n\left(\psi_{jk}(X_i)-\E(\psi_{jk}(X_i))\right)\right|^{2p}\right]\\
&\leq&\frac{C(p)}{n^{2p}}\left(\sum_{i=1}^n\E\left[\left|\psi_{jk}(X_i)-\E(\psi_{jk}(X_i))\right|^{2p}\right]+\left(\sum_{i=1}^n\Var(\psi_{jk}(X_i))\right)^p\right)\\
&\leq&\frac{C(p)}{n^{2p}}\left(\sum_{i=1}^n\left(2\norm{\psi_{jk}}_\infty\right)^{2p-2}\Var(\psi_{jk}(X_i))+\left(\sum_{i=1}^n\Var(\psi_{jk}(X_i))\right)^p\right)\\
&\leq&\frac{C(p)}{n^{2p}}\left(\left(2\norm{\psi_{jk}}_\infty\right)^{2p-2}n\si^2_{jk}+n^p\si^{2p}_{jk}\right)\\
&\leq&\frac{C(p)}{n^{2p}}\left(\left(2\norm{\psi_{jk}}_\infty\right)^{2p}nF_{jk}+n^p\norm{\psi_{jk}}_\infty^{2p}F_{jk}^p\right).
\end{eqnarray*}
Finally,
\begin{eqnarray*}
\left(\E(|\hb_\la-\be_\la|^{2p})\right)^{\frac{1}{p}}&\leq&\frac{4C(p)^{\frac{1}{p}}\norm{\psi_{jk}}_\infty^2}{n}\left(n^{1-p}F_{jk}+F_{jk}^p\right)^{\frac{1}{p}}\\
&\leq&\frac{4C(p)^{\frac{1}{p}}2^{j_0}\max(\norm{\phi}_\infty^2;\norm{\psi}_\infty^2)}{n}\left(n^{-\frac{1}{q}}F_{jk}^{\frac{1}{p}}+F_{jk}\right).
\end{eqnarray*}
So,    Assumption    (A2)    is    satisfied   with    $\e=\frac{1}{n}$    and
$$R=\frac{8C(p)^{\frac{1}{p}}2^{j_0}\max(\norm{\phi}_\infty^2;\norm{\psi}_\infty^2)}{n}.$$

\noindent
Finally, to prove Assumption (A3), we use the following lemma.
\begin{Lemma}
\label{nombredepoints} We set $$N_\la=\sum_{i=1}^n\indic_{\{X_i\in\Supp(\psi_{jk})\}} \quad\mbox{ and }\quad C'=\frac{14\gamma}{3}\geq\frac{14}{3}.$$ There exists an absolute constant $0<\theta'<1$ such
that if $nF_\la \leq \theta' C' \ln n $ and $(1-\theta') \ln n \geq \frac{3}{7}$ then,
$$\P(N_\la-nF_\la \geq (1-\theta') C' \ln n) \leq F_\la n^{-\gamma}.$$
\end{Lemma}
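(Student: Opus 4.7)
The random variable $N_\la=\sum_{i=1}^n\indic_{\{X_i\in\Supp(\psi_{jk})\}}$ is a sum of i.i.d.\ Bernoullis of parameter $F_\la$, hence $N_\la\sim\mathrm{Bin}(n,F_\la)$; if $F_\la=0$ both sides of the claimed inequality vanish, so I may assume $F_\la>0$. My strategy is to apply a Chernoff-type tail bound and then expose the prefactor $F_\la$ via a peeling argument, exploiting the hypothesis $F_\la\leq\theta' C'\log n/n$.

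Set $k:=\lceil nF_\la+(1-\theta')C'\log n\rceil$. The assumption $(1-\theta')\log n\geq 3/7$ together with $C'\geq 14/3$ yields $k\geq(1-\theta')C'\log n\geq 2$, which is precisely what will make the peeling step meaningful. The union bound over $k$-subsets of $\{1,\ldots,n\}$ combined with $\binom{n}{k}\leq(en/k)^{k}$ gives
\[
\P(N_\la\geq k)\leq\binom{n}{k}F_\la^{k}\leq\left(\frac{enF_\la}{k}\right)^{k},
\]
which I rewrite as $F_\la\cdot F_\la^{k-1}(en/k)^{k}$. Plugging $F_\la\leq\theta'C'\log n/n$ into $F_\la^{k-1}$ and $k\geq(1-\theta')C'\log n$ into the remaining ratios produces
\[
\P(N_\la\geq k)\leq F_\la\cdot\frac{en}{(1-\theta')C'\log n}\cdot\left(\frac{e\theta'}{1-\theta'}\right)^{k-1}.
\]

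It remains to choose $\theta'$ sufficiently small, independent of all other parameters, so that the product of the last two factors is at most $n^{-\gamma}$. I pick $\theta'<1/(1+e)$ and set $\rho:=\log((1-\theta')/\theta')-1>0$, so that $e\theta'/(1-\theta')=e^{-\rho}<1$. Taking logarithms and using $k-1\geq(1-\theta')C'\log n-1$, the desired inequality reduces to
\[
\bigl[\rho(1-\theta')C'-(1+\gamma)\bigr]\log n\;\geq\;1+\rho-\log\bigl((1-\theta')C'\log n\bigr).
\]
Since $C'=14\gamma/3$ and $(1+\gamma)/\gamma\leq 2$ for $\gamma\geq 1$, the bracketed coefficient on the left is positive as soon as $\rho(1-\theta')>3/7$; this holds for any sufficiently small $\theta'$ (for instance $\theta'=1/20$ yields $\rho(1-\theta')\approx 1.85$). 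The right-hand side is then of order $\log\log n$, and is dominated by the positive multiple of $\log n$ on the left under the hypothesis $(1-\theta')\log n\geq 3/7$, which is therefore the sharp quantitative threshold making everything fit together.

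The main subtlety is the peeling: the standard exponential binomial tail bounds (Chernoff, Bennett, Bernstein) depend on $F_\la$ only through the product $nF_\la$, and so do not directly produce the prefactor $F_\la$ demanded on the right-hand side. The factorisation $F_\la^{k}=F_\la\cdot F_\la^{k-1}$, combined with the upper bound $F_\la\leq\theta'C'\log n/n$, trades the remaining $k-1$ copies of $F_\la$ against $k-1$ of the $k$ factors of $n$ inside $(en/k)^{k}$, leaving only a harmless residual of order $n/\log n$ that is absorbed by the exponential decay $e^{-(k-1)\rho}$. Once this trick is in place, the rest is bookkeeping.
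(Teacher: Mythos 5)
Your proof is correct, but it follows a genuinely different route from the paper's. The paper applies Bernstein's inequality to the binomial variable $N_\la$, which under the hypothesis $nF_\la\leq\theta' C'\ln n$ yields $\P(N_\la-nF_\la\geq(1-\theta')C'\ln n)\leq n^{-3C'(1-\theta')^2/(2(2\theta'+1))}\leq n^{-2\gamma-2}$ once $\theta'$ is chosen so that $(1-\theta')^2/(2\theta'+1)\geq 4/7$; the prefactor $F_\la$ is then extracted by a case split: if $nF_\la\geq n^{-\gamma-1}$ one simply writes $n^{-2\gamma-2}\leq F_\la n^{-\gamma}$, while if $nF_\la\leq n^{-\gamma-1}$ one uses the hypothesis $(1-\theta')\ln n\geq 3/7$ (which forces the threshold to be at least $2$) together with the elementary binomial bound $\P(N_\la\geq 2)\leq(nF_\la)^2$ quoted from Shorack and Wellner. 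Your argument avoids both Bernstein and the case split: the union bound $\P(N_\la\geq k)\leq\binom{n}{k}F_\la^k\leq(enF_\la/k)^k$ with one factor $F_\la$ peeled off produces the prefactor uniformly, and the hypothesis $nF_\la\leq\theta' C'\ln n$ is consumed in bounding the geometric factor $(e\theta'/(1-\theta'))^{k-1}$. Your bookkeeping checks out (e.g.\ $\theta'=1/20$ gives $\rho(1-\theta')\approx 1.85$, a left-hand coefficient at least $6.6\gamma$, and a right-hand side at most $1+\rho-\ln 2<2.3$, which is below the minimal left-hand value $\approx 3$ attained at $(1-\theta')\ln n=3/7$, $\gamma=1$), although you state this final numerical verification rather loosely; it would be worth writing out explicitly that the worst case is $\gamma=1$ and $\ln n$ minimal. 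On balance, your proof is more elementary and self-contained, at the cost of heavier explicit constant-chasing; the paper's is shorter once Bernstein and the binomial tail fact are taken as given, and the same case-split trick is reused elsewhere in their proofs.
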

\begin{proof}
One takes $\theta'\in [0,1]$ 
such that $$\frac{(1-\theta')^2}{(2\theta'+1)}\geq \frac{4}{7}.$$
We use the Bernstein inequality that yields
$$\P(N_\la-nF_\la \geq (1-\theta')C' \ln n)
\leq\exp\left(-\frac{((1-\theta')C'\ln n)^2}{2(nF_\la+(1-\theta')C'\ln n/3)}\right)\leq
n^{-\frac{3C'(1-\theta')^2}{2(2\theta'+1)}}.$$
If $nF_\la \geq n^{-\gamma-1}$, since  $\frac{3 C'(1-\theta')^2}{2(2\theta'+1)} \geq
2 \gamma+2,$ the result is true.
If  $nF_\la \leq n^{-\gamma-1}$, using properties of Binomial random variables (see page 482 of \citet{sw}), for $n\geq 2$,
\begin{eqnarray*}
\P(N_\la-nF_\la \geq (1-\theta')C' \ln n)\leq\P(N_\la >(1-\theta')C'
\ln n)&\leq &\P(N_\la \geq 2)\\
&\leq&\frac{(1-F_{jk})C_n^2F_{jk}^2(1-F_{jk})^{n-2}}{1-3^{-1}(n+1)F_{jk}}\\
&\leq&\frac{n^2F_{jk}^2}{2(1-2^{-1}nF_{jk})}\\
&\leq & (nF_\la)^2
\end{eqnarray*}
and the result is true.
\end{proof}
Now, observe that if $|\hb_\la|>
\eta_{\la,\gamma}$ then
$$N_\la\geq C' \ln n.$$ Indeed, $|\hb_\la|>
\eta_{\la,\gamma}$ implies
$$\frac{C'\ln n
}n \norm{\p_\la}_\infty\leq |\hb_\la| \leq \frac{\norm{\p_\la}_\infty
N_\la}{n}.$$
So, if $n$ satisfies $(1-\theta') \ln n \geq \frac{3}{7}$, we set $\theta=\theta' C'\ln(n)$ and
$\mu=n^{-\gamma}$. In this case, Assumption (A3) is fulfilled since if $nF_\la \leq \theta' C' \ln n $
$$\P(|\hb_\la-\be_\la|>\kappa\eta_{\la,\ga}, |\hb_\la|>\eta_{\la,\ga})\leq \P(N_\la-nF_\la
\geq (1-\theta') C' \ln n) \leq F_\la n^{-\gamma}.$$

\noindent
Finally, if $n$ satisfies $(1-\theta') \ln n \geq \frac{3}{7}$, we can apply Theorem \ref{inegmodelsel} and we have:
\begin{equation}\label{init}
\frac{1-\kappa^2}{1+\kappa^2}\E\norm{\tb-\be}^2_{\ell_2}\leq \inf_{m\subset
  \Ga_n}\left\{\frac{1+\kappa^2}{1-\kappa^2}\sum_{(j,k)\not \in
  m}\be_\la^2+\frac{1-\kappa^2}{\kappa^2}\sum_{(j,k)\in
  m}\E(\hb_\la-\be_\la)^2+\sum_{(j,k)\in m}\E(\eta_{\la,\gamma}^2)\right\}+LD\sum_{(j,k)\in\Ga_n}F_\la.
\end{equation}
In addition,  there exists a constant  $K_1$ depending on  $p$, $\gamma$, $\kappa$, $c$,
$c'$ and on $\psi$ such that
\begin{equation}\label{init2}
LD\sum_{(j,k)\in\Ga_n}F_\la\leq K_1(\log(n))^{c'+1}n^{c-\frac{\kappa^2\ga}{q(2-\kappa)}-1}.
\end{equation}
Since $\ga>c$, one takes $\kappa<1$ and $q>1$ such that $c<\frac{\kappa^2\ga}{q(2-\kappa)}$ and as required by Theorem \ref{inegoraclelavraie}, the last term satisfies
$$LD\sum_{(j,k)\in\Ga_n}F_\la\leq \frac{K_2}{n},$$
where $K_2$ is a constant. Now we can derive the oracle inequality. Before evaluating the first term of (\ref{init}),
let us state the following lemma.
\begin{Lemma}\label{ecra}
We set for any $(j,k) \in \La$
$$D_{jk}=\int \p^2_{jk}(x) f(x)dx,$$
$$S_\p=\max\{\sup_{x\in \Supp(\phi)}|\phi(x)|,\sup_{x\in
 \Supp(\psi)}|\psi(x)|\}$$ and $$I_\p=\min\{\inf_{x\in
 \Supp(\phi)}|\phi(x)|,\inf_{x\in
 \Supp(\psi)}|\psi(x)|\}.$$
Using Appendix A, we define $\Theta_\p=\frac{S_\p^2}{I_\p^2}.$
For all $(j,k) \in \La$, we have the following result.
\begin{itemize}
\item[-] If
$F_\la\leq \Theta_\p\frac{\ln(n)}{n},$
then
$\be_\la^2\leq\Theta_\p^2D_{jk}\frac{\ln(n)}{n}.$\\
\item[-] If $F_\la>\Theta_\p\frac{\ln(n)}{n},$
then                                $\norm{\p_\la}_{\infty}\frac{\ln(n)}{n}\leq
\sqrt{\frac{D_{jk}\ln(n)}{n}}.$
\end{itemize}
\end{Lemma}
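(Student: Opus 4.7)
The plan is to exploit the piecewise-constant structure of the decomposition wavelets $\p_\la$ described in Appendix A, which forces $|\p_\la(x)|$ to be comparable to $\norm{\p_\la}_\infty$ everywhere on its support. More precisely, using the scaling $\p_{jk}(x)=2^{j/2}\psi(2^j x-k)$ (and $\phi(x-k)$ for $j=-1$), I will first record the two uniform estimates
\[
\norm{\p_\la}_\infty^2\leq 2^{j\vee 0}\,S_\p^2,\qquad \inf_{x\in\Supp(\p_\la)}|\p_\la(x)|^2\geq 2^{j\vee 0}\,I_\p^2,
\]
which together give the key ratio bound $\norm{\p_\la}_\infty^2\leq \Theta_\p\,\inf_{x\in\Supp(\p_\la)}|\p_\la(x)|^2$.

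From the lower bound on $|\p_\la|$ on its support, I obtain the link between $D_\la$ and $F_\la$:
\[
D_\la=\int \p_\la^2(x)f(x)\,dx\;\geq\;\Big(\inf_{x\in\Supp(\p_\la)}|\p_\la(x)|^2\Big)\,F_\la\;\geq\;\frac{\norm{\p_\la}_\infty^2}{\Theta_\p}\,F_\la,
\]
equivalently $\norm{\p_\la}_\infty^2\,F_\la\leq \Theta_\p\,D_\la$. This single inequality is the workhorse for both parts of the lemma, so the step deserving a bit of care is simply the correct bookkeeping of the scaling factor $2^{j/2}$ (and the special case $j=-1$ where $\phi=\indic_{[0,1]}$).

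For the first case, I apply Cauchy--Schwarz to $\be_\la=\int \p_\la(x)f(x)\,dx$, which yields $\be_\la^2\leq D_\la F_\la$; under the assumption $F_\la\leq \Theta_\p\,\ln(n)/n$ this gives $\be_\la^2\leq \Theta_\p D_\la\,\ln(n)/n\leq \Theta_\p^2 D_\la\,\ln(n)/n$ since $\Theta_\p\geq 1$. For the second case, the workhorse inequality rewrites as $\norm{\p_\la}_\infty^2\leq \Theta_\p D_\la/F_\la$; the hypothesis $F_\la>\Theta_\p\,\ln(n)/n$ then gives $\norm{\p_\la}_\infty^2<D_\la\,n/\ln(n)$, so multiplying both sides by $(\ln n/n)^2$ and taking square roots yields $\norm{\p_\la}_\infty\,\ln(n)/n<\sqrt{D_\la\,\ln(n)/n}$, which is the desired bound. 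There is no real obstacle beyond checking the constants; the whole argument is a direct consequence of the uniform lower/upper control of $|\p_\la|$ on its support.
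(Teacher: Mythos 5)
Your proof is correct and follows essentially the same route as the paper: both rest on the fact that the piecewise-constant decomposition wavelets are bounded above and below on their support, which yields $D_{jk}\geq I_\p^2\,2^{j\vee 0}F_\la$, i.e.\ $\norm{\p_\la}_\infty^2F_\la\leq\Theta_\p D_{jk}$, and the second case is handled identically. The only difference is in the first case, where you use Cauchy--Schwarz ($\be_\la^2\leq D_{jk}F_\la$) together with $\Theta_\p\geq 1$ instead of the paper's pointwise bound $|\be_\la|\leq S_\p 2^{j/2}F_\la$; this is a minor variant that even gives the slightly sharper constant $\Theta_\p$ in place of $\Theta_\p^2$.
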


\begin{proof}
We assume that $j\geq 0$ (arguments are similar for
$j=-1$).\\ If
$F_\la\leq\Theta_\p\frac{\ln(n)}{n}$, we have
$$
|\be_\la|\leq S_\p 2^{\frac{j}{2}}F_\la
\leq S_\p2^{\frac{j}{2}}\sqrt{F_\la}\sqrt{\Theta_\p}\sqrt{\frac{\ln(n)}{n}}
\leq S_\p I^{-1}_\p\sqrt{\Theta_\p}\sqrt{\frac{D_{jk}\ln(n)}{n}}
\leq \Theta_\p\sqrt{\frac{D_{jk}\ln(n)}{n}},
$$
since
$D_{jk}\geq I^2_\p 2^jF_\la.$
For the second point, observe that
$$
\sqrt{\frac{D_{jk}\ln(n)}{n}}\geq 2^{\frac{j}{2}}I_\p\sqrt{\Theta_\p} \frac{\ln(n)}{n}=2^{\frac{j}{2}}S_\p\frac{\ln(n)}{n}\geq\norm{\psi_\la}_{\infty}\frac{\ln(n)}{n}
.
$$
\end{proof}
Now, for any $\delta>0$,
$$
\E(\eta_{\la,\gamma}^2)\leq (1+\delta) \frac{2 \ga \ln n}{n} \E({\widetilde{\si}}^2_{jk})+ (1+\delta^{-1})\left(\frac{2\ga \ln n}{3n}\right)^2\norm{\p_\la}_\infty^2.
$$
Moreover, $$\frac{\E({\widetilde{\si}}^2_{jk})}{n}\leq (1+\delta) \frac{D_{jk}}{n} +(1+\delta^{-1}) 8
\gamma \ln n \frac{\norm{\p_\la}_\infty^2}{n^2}.$$
So,
\begin{equation}\label{majothresh}
\E(\eta_{\la,\gamma}^2)\leq (1+\delta)^2 2 \ga \ln n   \frac{D_{jk}}{n} + \Delta(\delta) \left(\frac{\ga \ln n}{n}\right)^2\norm{\p_\la}_\infty^2,
\end{equation}
with $\Delta(\delta)$ a constant depending only on $\delta$. Now, we apply  (\ref{init}) with $$m=\left\{(j,k)\in\Ga_n:\quad\be_\la^2>\Theta_\p^2\frac{D_{jk}}{n}\ln n\ \right\},$$
so using Lemma \ref{ecra}, we can claim that for any $(j,k) \in m$, $F_\la>\Theta_\p\frac{\ln(n)}{n}$. Finally, since $\Theta_\p\geq 1$,
\begin{eqnarray*}
\E\norm{\tb-\be}^2_{\ell_2}&\leq& K_3\left( \sum_{(j,k)\in\Ga_n}\be_\la^2\indic_{\{\be_\la^2\leq\Theta_\p^2\frac{D_{jk}}{n}\ln
n\}}+\sum_{(j,k)\notin\Gamma_n}\be_\la^2\right)\\
&&+K_3\sum_{(j,k)\in\Ga_n}\left[
\frac{\ln n}{n} D_{jk} +\left(\frac{\ln n
}{n}\right)^2\norm{\p_\la}_\infty^2\right]\indic_{\left\{\be_\la^2>\Theta_\p^2\frac{D_{jk}}{n}\ln
n, \
F_\la>\Theta_\p\frac{\ln(n)}{n}\right\}}+\frac{K_4}{n}\\
&\leq& K_3\left[\sum_{(j,k)\in\Ga_n}\left(\be_\la^2\indic_{\left\{\be_\la^2\leq
\Theta_\p^2 \ln n\frac{D_{jk}}{n}\right\} }+2\ln n
\frac{D_{jk}}{n}\indic_{\left\{\be_\la^2>\Theta_\p^2 \ln
n\frac{D_{jk}}{n}\right\}}\right)+\sum_{(j,k)\notin\Gamma_n}\be_\la^2\right]+\frac{K_4}{n}\\
&\leq&2K_3\left[\sum_{(j,k)\in\Ga_n}\min\left(\be_\la^2,\Theta_\p^2\ln
n\frac{D_{jk}}{n}\right)+\sum_{(j,k)\notin\Gamma_n}\be_\la^2\right]+\frac{K_4}{n},
\end{eqnarray*}
where the constant $K_3$ depends on $\gamma$ and $c$ and $K_4$ depends on $\gamma$, $c$, $c'$ and on $\psi$. Finally, since
$$D_{jk}=\si^2_{jk}+\beta^2_{jk},$$
\begin{eqnarray*}
\E\norm{\tb-\be}^2_{\ell_2}&\leq&2K_3\left[\sum_{(j,k)\in\Ga_n}\min\left(\be_\la^2+\frac{\Theta_\p^2\ln
n}{n}\be_\la^2,\Theta_\p^2\ln
n\frac{\si^2_{jk}}{n}+\frac{\Theta_\p^2\ln
n}{n}\be_\la^2\right)+\sum_{(j,k)\notin\Gamma_n}\be_\la^2\right]+\frac{K_4}{n}\\
&\leq&2K_3\left[\sum_{(j,k)\in\Ga_n}\min\left(\be_\la^2,\Theta_\p^2\ln
n\frac{\si^2_{jk}}{n}\right)+\sum_{(j,k)\in\Ga_n}\frac{\Theta_\p^2\ln
n}{n}\be_\la^2+\sum_{(j,k)\notin\Gamma_n}\be_\la^2\right]+\frac{K_4}{n}\\
&\leq&2K_3\Theta_\p^2\left[\sum_{(j,k)\in\Ga_n}\min\left(\be_\la^2,\ln
n\frac{\si^2_{jk}}{n}\right)+\sum_{(j,k)\notin\Gamma_n}\be_\la^2\right]+2K_3\Theta_\p^2\norm{\be}_{\ell_2}\frac{\ln
n}{n}+\frac{K_4}{n}.
\end{eqnarray*}
Theorem \ref{inegoraclelavraie} is proved by using properties of the biorthogonal wavelet basis.

\subsection{Proof of Theorem \ref{lower}}
The first part is a direct application of Theorem \ref{inegoraclelavraie}. Now let us turn to the second part.
We recall that we consider $f=\indic_{[0,1]}$, the Haar basis and  for $j\geq 0$ and $k\in\Z$, we have:
$${\widetilde{\si}}^2_{jk}=\widehat{\si}^2_{jk}+2\norm{\p_{jk}}_\infty\sqrt{2\gamma\widehat{\si}^2_{jk}\frac{\ln n}{n}}+8\gamma \norm{\p_{jk}}_\infty^2 \frac{\ln n}{n}.$$
So, for any $0<\e<\frac{1-\gamma}{2}<\frac{1}{2}$,
$$
{\widetilde{\si}}^2_{jk}\leq(1+\e)\widehat{\si}^2_{jk}+2\gamma \norm{\p_{jk}}_\infty^2 \frac{\ln n}{n}\left(\e^{-1}+4\right)
.$$
Now,
\begin{eqnarray*}
\eta_{\la,\ga}&=&\sqrt{2\gamma {\widetilde{\si}}^2_{jk}\frac{\ln n}{n}} +\frac{2\norm{\p_{jk}}_\infty\gamma \ln n}{3n}\\
&\leq&\sqrt{2\gamma \frac{\ln n}{n}\left((1+\e)\widehat{\si}^2_{jk}+2\gamma \norm{\p_{jk}}_\infty^2 \frac{\ln n}{n}\left(\e^{-1}+4\right)\right)}+\frac{2\norm{\p_{jk}}_\infty\gamma \ln n}{3n}\\
&\leq&\sqrt{2\gamma (1+\e)\widehat{\si}^2_{jk} \frac{\ln n}{n}}+\frac{2\norm{\p_{jk}}_\infty\gamma \ln n}{n}\left(\frac{1}{3}+\sqrt{4+\e^{-1}}\right).
\end{eqnarray*}
Furthermore, using (\ref{sn})
$$ \widehat{\si}^2_{jk}=s_n-\frac{2}{n(n-1)}u_n,$$
and
$$\eta_{\la,\ga}\leq\sqrt{2\gamma (1+\e)\frac{\ln n}{n}s_n}+\sqrt{2\gamma (1+\e)\frac{\ln n}{n}\times\frac{2}{n(n-1)}|u_n|}+\frac{2\norm{\p_{jk}}_\infty\gamma \ln n}{n}\left(\frac{1}{3}+\sqrt{4+\e^{-1}}\right).$$
Using (\ref{concustats}), with probability larger than $1-6n^{-2}$,
$$|u_n|\leq U(2\ln n),$$
and, since $f=\indic_{[0,1]}$, we have $\si^2_{jk}\leq 1$ and
\begin{eqnarray*}
\frac{2}{n(n-1)}U(2\ln n) 
&\leq& C_1\frac{\log n}{n}+C_2\norm{\p_{jk}}_\infty^2\left(\frac{\log n}{n}\right)^{\frac{3}{2}},
\end{eqnarray*}
where 
$C_1$ and $C_2$ are universal constants. Finally, with probability larger than $1-6n^{-2}$,
$$\sqrt{2\gamma (1+\e)\frac{\ln n}{n}\times\frac{2}{n(n-1)}|u_n|}\leq\sqrt{2\gamma (1+\e)C_1}\frac{\ln n}{n}+\sqrt{2\gamma (1+\e)C_2}\norm{\p_{jk}}_\infty\left(\frac{\ln n}{n}\right)^{\frac{5}{4}}.$$
So, since $\gamma<1$, there exists $w(\e)$, only depending on $\e$ such that with probability larger than $1-6n^{-2}$,
\begin{eqnarray*}
\eta_{jk,\ga}&\leq&\sqrt{2\gamma (1+\e)\frac{\ln n}{n}s_n}+w(\e)\norm{\psi_{jk}}_\infty \frac{\ln n}{n}.
\end{eqnarray*}
Since $\norm{\psi_{jk}}_\infty=2^{j/2}$, we set
$$\widetilde{\eta_{jk,\ga}}=\sqrt{2\ga(1+\e) s_{n}\frac{\log n}{n}}+w(\e)\frac{2^{\frac{j}{2}}\log n}{n}$$
and $\eta_{jk,\ga}\leq \widetilde{\eta_{jk,\ga}}$ with probability larger than $1-6n^{-2}$. Then, since $f=\indic_{[0,1]}$, $\beta_{jk}=0$ for $j\geq 0$ and
\begin{eqnarray*}
s_{n}&=&\frac{1}{n}\sum_{i=1}^n\left(\psi_{jk}(X_i)-\beta_{jk}\right)^2\\
&=&\frac{2^j}{n}\sum_{i=1}^n\left(\indic_{X_i\in
[k2^{-j},(k+0.5)2^{-j}[}-\indic_{X_i\in
[(k+0.5)2^{-j},(k+1)2^{-j}[}\right)^2\\
&=&\frac{2^j}{n}\left(N^+_{jk}+N^-_{jk}\right),
\end{eqnarray*}
with $$N^+_{jk}=\sum_{i=1}^n\indic_{X_i\in
[k2^{-j},(k+0.5)2^{-j}[},\quad N^-_{jk}=\sum_{i=1}^n\indic_{X_i\in
[(k+0.5)2^{-j},(k+1)2^{-j}[}.$$
We consider $j$ such that
$$\frac{n}{(\log n)^\al}\leq 2^j<\frac{2n}{(\log n)^\al},\quad \al>1.$$
In particular, we have $$\frac{(\log n)^\al}{2}< n2^{-j}\leq (\log n)^\al.$$
Now,
$$\hb_\jk=\frac{1}{n}\sum_{i=1}^n\psi_{jk}(X_i)= \frac{2^{\frac{j}{2}}}{n}(N^+_{jk}-N^-_{jk}).$$
Hence,
\begin{eqnarray*}
\E(\norm{\tilde{f}_{n,\ga}-f}^2_2)&\geq        &        \sum_{k=0}^{2^j-1}
 \E\left(\hb_{jk}^2\indic_{|\hb_{jk}|\geq\eta_{jk,\ga}}\right)\\
&\geq        &        \sum_{k=0}^{2^j-1}
\E\left(\hb_{jk}^2\indic_{|\hb_{jk}|\geq\widetilde{\eta_{jk,\ga}}}\indic_{|u_n|\leq U(2\ln n)}\right)\\
&\geq & \sum_{k=0}^{2^j-1}
\frac{2^j}{n^2}\E\left((N^+_{jk}-N^-_{jk})^2 \indic_{|\hb_{jk}|\geq
\sqrt{2\ga (1+\e)s_{n}\frac{\log n}{n}}+w(\e)\frac{2^{j/2}\log n}{n}}\indic_{|u_n|\leq U(2\ln n)}\right).\\
&\geq & \sum_{k=0}^{2^j-1}
\frac{2^j}{n^2}\E\left((N^+_{jk}-N^-_{jk})^2
\indic_{\frac{2^{\frac{j}{2}}}{n}|N^+_{jk}-N^-_{jk}|\geq               \sqrt{2\ga(1+\e)
\frac{2^j}{n}\left(N^+_{jk}+N^-_{jk}\right)\frac{\log
n}{n}}+w(\e)\frac{2^{j/2}\log n}{n}}\indic_{|u_n|\leq U(2\ln n)}\right)\\
&\geq & \sum_{k=0}^{2^j-1}
\frac{2^j}{n^2}\E\left((N^+_{jk}-N^-_{jk})^2
\indic_{|N^+_{jk}-N^-_{jk}|\geq               \sqrt{2\ga(1+\e)
\left(N^+_{jk}+N^-_{jk}\right)\log
n}+w(\e)\log n}\indic_{|u_n|\leq U(2\ln n)}\right)\\
&\geq &
\frac{2^{2j}}{n^2}\E\left((N^+_{j1}-N^-_{j1})^2
\indic_{|N^+_{j1}-N^-_{j1}|\geq               \sqrt{2\ga(1+\e)
\left(N^+_{j1}+N^-_{j1}\right)\log
n}+w(\e)\log n}\indic_{|u_n|\leq U(2\ln n)}\right).
\end{eqnarray*}
Now, we consider a bounded sequence $(w_n)_n$ such that for any $n$, $w_n\geq w(\e)$ and such that $\frac{\sqrt{v_{nj}}}{2}$ is an integer with
$$v_{nj}=\left(\sqrt{4\ga(1+\e)\tilde\mu_{nj}\log(n)  }+w_n\log(n)\right)^2$$
and $\tilde\mu_{nj}$ is the largest integer smaller or equal to $n2^{-j-1}$. We have
$$v_{nj}\sim 4\ga(1+\e)\tilde\mu_{nj}\log n$$
and
$$\frac{(\log n)^\al}{4}-1<  n2^{-j-1}-1 <\tilde\mu_{nj}\leq n2^{-j-1}\leq \frac{(\log n)^\al}{2}.$$
So, if
$$N^+_{j1}=\tilde\mu_{nj}+\frac{1}{2}\sqrt{v_{nj}},\quad
N^-_{j1}=\tilde\mu_{nj}-\frac{1}{2}\sqrt{v_{nj}},$$
then
$$N^+_{j1}+N^-_{j1}=2\tilde\mu_{nj},\quad N^+_{j1}-N^-_{j1}=\sqrt{v_{nj}}=\sqrt{2\ga(1+\e)
\left(N^+_{j1}+N^-_{j1}\right)\log
n}+w_n\log n.$$
Finally,
\begin{eqnarray*}
\E(\norm{\tilde{f}_{n,\ga}-f}^2_2)&\geq &\frac{2^{2j}}{n^2}v_{nj}\P\left(
N^+_{j1}=\tilde\mu_{nj}+\frac{1}{2}\sqrt{v_{nj}},\quad
N^-_{j1}=\tilde\mu_{nj}-\frac{1}{2}\sqrt{v_{nj}},\quad |u_n|\leq U(2\ln n)\right)\\
&\geq&v_{nj}(\log
n)^{-2\al}\\
&&\hspace{0.3cm}\times\left[\P\left(
N^+_{j1}=\tilde\mu_{nj}+\frac{1}{2}\sqrt{v_{nj}},\quad
N^-_{j1}=\tilde\mu_{nj}-\frac{1}{2}\sqrt{v_{nj}}\right)-\P\left(|u_n|>U(2\ln n)\right)\right]\\
&\geq&v_{nj}(\log
n)^{-2\al}\left[\frac{n!}{l_{nj}!m_{nj}!(n-l_{nj}-m_{nj})!}p_j^{l_{nj}+m_{nj}}(1-2p_j)^{n-(l_{nj}+m_{nj})}-\frac{6}{n^2}\right],
\end{eqnarray*}
with
$$l_{nj}=\tilde\mu_{nj}+\frac{1}{2}\sqrt{v_{nj}},\quad m_{nj}=\tilde\mu_{nj}-\frac{1}{2}\sqrt{v_{nj}},$$
and
$$p_j=\int\indic_{
[k2^{-j},(k+0.5)2^{-j}[}(x) f(x)dx=\int \indic_{
[(k+0.5)2^{-j},(k+1)2^{-j}[}(x) f(x)dx=2^{-j-1}.$$
So,
\begin{eqnarray*}
\E(\norm{\tilde{f}_{n,\ga}-f}^2_2)&\geq &v_{nj}(\log
n)^{-2\al}\times\left[\frac{n!}{l_{nj}!m_{nj}!(n-2\tilde\mu_{nj})!}p_j^{2\tilde\mu_{nj}}(1-2p_j)^{n-2\tilde\mu_{nj}}-\frac{6}{n^2}\right].
\end{eqnarray*}
Now, let us study each term:
\begin{eqnarray*}
p_j^{2\tilde\mu_{nj}}&=&\exp\left(2\tilde\mu_{nj}\log(p_j)\right)\\
&=&\exp\left(2\tilde\mu_{nj}\log(2^{-j-1})\right),
\end{eqnarray*}
\begin{eqnarray*}
(1-2p_j)^{n-2\tilde\mu_{nj}}&=&\exp\left((n-2\tilde\mu_{nj})\log(1-2p_j)\right)\\
&=&\exp\left(-(n-2\tilde\mu_{nj})\left(2^{-j}+O_n(2^{-2j})\right)\right)\\
&=&\exp\left(-n2^{-j}\right)(1+o_n(1)),
\end{eqnarray*}
\begin{eqnarray*}
n!&=&n^ne^{-n}\sqrt{2\pi n}\;(1+o_n(1)),
\end{eqnarray*}
\begin{eqnarray*}
(n-2\tilde\mu_{nj})^{n-2\tilde\mu_{nj}}&=&\exp\left(\left(n-2\tilde\mu_{nj}\right)\log\left(n-2\tilde\mu_{nj}\right)\right)\\
&=&\exp\left(\left(n-2\tilde\mu_{nj}\right)\left(\log n+\log\left(1-\frac{2\tilde\mu_{nj}}{n}\right)\right)\right)\\
&=&\exp\left(\left(n-2\tilde\mu_{nj}\right)\log n-\frac{2\tilde\mu_{nj}\left(n-2\tilde\mu_{nj}\right)}{n}\right)(1+o_n(1))\\
&=&\exp\left(n\log n-2\tilde\mu_{nj}-2\tilde\mu_{nj}\log n\right)(1+o_n(1)).
\end{eqnarray*}
Then,
\begin{eqnarray*}
\frac{n!}{(n-2\tilde\mu_{nj})!}p_j^{2\tilde\mu_{nj}}(1-2p_j)^{n-2\tilde\mu_{nj}}
&=&\frac{e^{n-2\tilde\mu_{nj}}}{e^n}\times
\frac{n^n}{(n-2\tilde\mu_{nj})^{n-2\tilde\mu_{nj}}}\times  p_j^{2\tilde\mu_{nj}}(1-2p_j)^{n-2\tilde\mu_{nj}}\times
(1+o_n(1))\\
&=&\exp\left(-2\tilde\mu_{nj}\right)\times
\frac{\exp\left(n\log n\right)}{(n-2\tilde\mu_{nj})^{n-2\tilde\mu_{nj}}}\times  p_j^{2\tilde\mu_{nj}}(1-2p_j)^{n-2\tilde\mu_{nj}}\times
(1+o_n(1))\\
&=&\exp\left(-2\tilde\mu_{nj}\right)\times\frac{\exp\left(n\log n+2\tilde\mu_{nj}\log(2^{-j-1})-n2^{-j}\right)}{\exp\left(n\log n-2\tilde\mu_{nj}-2\tilde\mu_{nj}\log n\right)}(1+o_n(1))\\
&=&\exp\left(2\tilde\mu_{nj}\log n+2\tilde\mu_{nj}\log(2^{-j-1})-n2^{-j}\right)(1+o_n(1)).
\end{eqnarray*}
It remains to evaluate $l_{nj}!\times m_{nj}!$
\begin{eqnarray*}
l_{nj}!\times      m_{nj}!&=&\left(\frac{l_{nj}}{e}\right)^{l_{nj}}\left(\frac{m_{nj}}{e}\right)^{m_{nj}}\sqrt{2\pi
l_{nj}}\sqrt{2\pi m_{nj}}(1+o_n(1))\\
&=&\exp\left(l_{nj}\log l_{nj}+m_{nj}\log m_{nj}-2\tilde\mu_{nj}\right)\times 2\pi\tilde\mu_{nj}(1+o_n(1)).
\end{eqnarray*}
If we set
$$x_{nj}=\frac{\sqrt{v_{nj}}}{2\tilde\mu_{nj}}=o_n(1),$$
then
$$l_{nj}=\tilde\mu_{nj}+\frac{\sqrt{v_{nj}}}{2}=\tilde\mu_{nj}(1+x_{nj}),$$
$$m_{nj}=\tilde\mu_{nj}-\frac{\sqrt{v_{nj}}}{2}=\tilde\mu_{nj}(1-x_{nj}),$$
and using that
\begin{eqnarray*}
(1+x_{nj})\log(1+x_{nj})&=&(1+x_{nj})\left(x_{nj}-\frac{x_{nj}^2}{2}+\frac{x_{nj}^3}{3}+O(x_{nj}^4)\right)\\
&=&x_{nj}-\frac{x_{nj}^2}{2}+\frac{x_{nj}^3}{3}+x_{nj}^2-\frac{x_{nj}^3}{2}+O(x_{nj}^4)\\
&=&x_{nj}+\frac{x_{nj}^2}{2}-\frac{x_{nj}^3}{6}+O(x_{nj}^4)
\end{eqnarray*}
\begin{eqnarray*}
l_{nj}\log l_{nj}&=&\tilde\mu_{nj}(1+x_{nj})\log\left(\tilde\mu_{nj}(1+x_{nj})\right)\\
&=&\tilde\mu_{nj}(1+x_{nj})\log(1+x_{nj})+\tilde\mu_{nj}(1+x_{nj})\log\left(\tilde\mu_{nj}\right)\\
&=&\tilde\mu_{nj}\left(x_{nj}+\frac{x_{nj}^2}{2}-\frac{x_{nj}^3}{6}+O(x_{nj}^4)\right)+\tilde\mu_{nj}(1+x_{nj})\log\left(\tilde\mu_{nj}\right).
\end{eqnarray*}
Similarly,
\begin{eqnarray*}
m_{nj}\log m_{nj}&=&\tilde\mu_{nj}\left(-x_{nj}+\frac{x_{nj}^2}{2}+\frac{x_{nj}^3}{6}+O(x_{nj}^4)\right)+\tilde\mu_{nj}(1-x_{nj})\log\left(\tilde\mu_{nj}\right).
\end{eqnarray*}
So,
\begin{eqnarray*}
l_{nj}\log l_{nj}+m_{nj}\log m_{nj}&=&\tilde\mu_{nj}\left(x_{nj}^2+O(x_{nj}^4)\right)+2\tilde\mu_{nj}\log\left(\tilde\mu_{nj}\right)\\
&\leq&\tilde\mu_{nj}x_{nj}^2+2\tilde\mu_{nj}\log(n2^{-j-1})+O(\tilde\mu_{nj}x_{nj}^4).
\end{eqnarray*}
Since
$$\tilde\mu_{nj}x_{nj}^2=\frac{v_{nj}}{4\tilde\mu_{nj}}\sim\ga(1+\e)\log n,
$$
for $n$ large enough,
$$
\tilde\mu_{nj}x_{nj}^2+O(\tilde\mu_{nj}x_{nj}^4)\leq(\ga+2\e)\log n
$$
and
$$l_{nj}\log l_{nj}+m_{nj}\log m_{nj}\leq (\ga+2\e)\log n+2\tilde\mu_{nj}\log(n2^{-j-1}).$$
Finally,
\begin{eqnarray*}
l_{nj}!\times      m_{nj}!
&=&\exp\left(l_{nj}\log l_{nj}+m_{nj}\log m_{nj}-2\tilde\mu_{nj}\right)\times 2\pi\tilde\mu_{nj}(1+o_n(1))\\
&\leq&\exp\left((\ga+2\e)\log n+2\tilde\mu_{nj}\log(n2^{-j-1})-2\tilde\mu_{nj}\right)\times 2\pi\tilde\mu_{nj}(1+o_n(1)).
\end{eqnarray*}
we derive that 
\begin{eqnarray*}
\E(\norm{\tilde{f}_{n,\ga}-f}^2_2)&\geq &
v_{nj}(\log
n)^{-2\al}\times\left[\frac{n!}{l_{nj}!m_{nj}!(n-2\tilde\mu_{nj})!}p_j^{2\tilde\mu_{nj}}(1-2p_j)^{n-2\tilde\mu_{nj}}-\frac{6}{n^2}\right]\\
&\geq&v_{nj}(\log
n)^{-2\al}\times\left[
\frac{\exp\left(2\tilde\mu_{nj}\log n+2\tilde\mu_{nj}\log(2^{-j-1})-n2^{-j}\right)}{\exp\left((\ga+2\e)\log n+2\tilde\mu_{nj}\log(n2^{-j-1})-2\tilde\mu_{nj}\right)\times 2\pi\tilde\mu_{nj}}-\frac{6}{n^2}\right](1+o_n(1))
\\
&\geq&
v_{nj}(\log
n)^{-2\al}\times\left[
\frac{\exp\left(-(\ga+2\e)\log n-2\right)}{2\pi\tilde\mu_{nj}}-\frac{6}{n^2}\right](1+o_n(1))\\
\end{eqnarray*}
So there exists $C_1$ and $C_2$ two positive constants such that, for $n$ large enough
$$\E(\norm{\tilde{f}_{n,\ga}-f}^2_2) \geq C_1 (\log n)^{1-\alpha}\left[C_2 \frac{n^{-(\ga+2\e)}}{(\log n)^\alpha}-\frac{6}{n^2}\right].$$
As $0<\ga+2\e<1$, there exists a positive constant $\delta<1$ such that
$$\E(\norm{\tilde{f}_{n,\ga}-f}^2_2) \geq \frac{1}{n^{\delta}}(1+o_n(1)).$$
This concludes the proof of Theorem \ref{lower}.

\vspace{2cm}

\noindent{\bf Ackowledgment}: The authors  acknowledge the support of the  French Agence Nationale
de la Recherche  (ANR), under grant ATLAS (JCJC06\_137446) ''From Applications
to Theory in Learning and Adaptive Statistics''. We also warmly thank Rebecca Willett for her very smart program, and both A. Antoniadis and L. Birg\'e for a wealth of advice and encouragement.

\bibliographystyle{agms}

\end{document}